\providecommand{\U}[1]{\protect\rule{.1in}{.1in}}
\providecommand{\U}[1]{\protect\rule{.1in}{.1in}}
\providecommand{\U}[1]{\protect\rule{.1in}{.1in}}
\newtheorem{theorem}{Theorem}
\newtheorem{algorithm}[theorem]{Algorithm}
\newtheorem{condition}[theorem]{Condition}
\numberwithin{equation}{section}
\numberwithin{theorem}{section}
\newtheorem{definition}[theorem]{Definition}
\newtheorem{example}[theorem]{Example}
\newtheorem{lemma}[theorem]{Lemma}
\newtheorem{notation}[theorem]{Notation}
\newtheorem{proposition}[theorem]{Proposition}
\newtheorem{remark}[theorem]{Remark}
\newenvironment{proof}[1][Proof]{\textbf{#1.} }{\ \rule{0.5em}{0.5em}}
\begin{document}

\title{\textbf{An algorithm for solving the variational inequality problem over the
fixed point set of a quasi-nonexpansive operator in Euclidean space}}
\author{Andrzej Cegielski$^{1}$, Aviv Gibali$^{2}$, Simeon Reich$^{2}$ and
Rafa\l \ Zalas$^{1}$\bigskip\\$^{1}$Faculty of Mathematics, Computer Science and Econometrics,\\University of Zielona G\'{o}ra, Zielona G\'{o}ra, Poland\bigskip\\$^{2}$Department of Mathematics,\\The Technion - Israel Institute of Technology\\Technion City, 32000 Haifa, Israel}
\date{July 25, 2012, Revised December 17, 2012.}
\maketitle

\begin{abstract}
This paper is concerned with the variational inequality problem\newline%
\textit{VIP}$(\mathcal{F},\operatorname{Fix}\left(  T\right)  )$: find
$\bar{u}\in\operatorname{Fix}\left(  T\right)  $ such that $\langle
\mathcal{F}(\bar{u}),z-\bar{u}\rangle\geq0$ for all $z\in\operatorname{Fix}%
\left(  T\right)  $, where $T:%
%TCIMACRO{\U{211d} }%
%BeginExpansion
\mathbb{R}
%EndExpansion
^{n}\rightarrow%
%TCIMACRO{\U{211d} }%
%BeginExpansion
\mathbb{R}
%EndExpansion
^{n}$ is quasi-nonexpansive, $\operatorname{Fix}(T)$ is its nonempty fixed
point set, and $\mathcal{F}:%
%TCIMACRO{\U{211d} }%
%BeginExpansion
\mathbb{R}
%EndExpansion
^{n}\rightarrow%
%TCIMACRO{\U{211d} }%
%BeginExpansion
\mathbb{R}
%EndExpansion
^{n}$ is monotone.
%We study the convergence properties of sequences generated
%by the following hybrid steepest descent method: $u^{k+1}=T\left(
%u^{k}\right)  -\lambda_{k}\mathcal{F}\left(  T\left(  u^{k}\right)
%\right)$,
%where $\lambda_{k}\geq0$, $\lim_{k}\lambda_{k}=0$ and $\sum_{k=0}^{\infty
%}\lambda_{k}=+\infty$. The method was studied in I. Yamada, N. Ogura, Hybrid
%steepest descent method for variational inequality problem over the fixed
%point set of certain quasi-nonexpansive mappings, \textit{Numererical
%Functional Analysis and Optimization} \textbf{25} (2004) 619--655. We prove
%its convergence under different assumptions than in the original paper.
We propose, in particular, an algorithm which entails, at each step,
projecting onto a suitably chosen half-space, and prove that the sequences it
generates converge to the unique solution of the VIP. We also present an
application of our result to a hierarchical optimization problem.\smallskip

\textit{Key words}: Fixed point, quasi-nonexpansive operator, variational
inequality problem.\smallskip

\textit{AMS Mathematical Subject Classification}: 47H05, 47H09, 47H10, 47J20,
47J40, 65K15, 90C23.

\end{abstract}

\section{\textbf{Introduction}}

The classical variational inequality problem (VIP) is to find a point
$x^{\ast}\in S$ such that
\begin{equation}
\left\langle \mathcal{F}(x^{\ast}),x-x^{\ast}\right\rangle \geq0\text{ for all
}x\in S\text{,} \label{eq:Classicvip}%
\end{equation}
where $S\subseteq%
%TCIMACRO{\U{211d} }%
%BeginExpansion
\mathbb{R}
%EndExpansion
^{n}$ is nonempty, closed and convex, $\mathcal{F}:%
%TCIMACRO{\U{211d} }%
%BeginExpansion
\mathbb{R}
%EndExpansion
^{n}\rightarrow%
%TCIMACRO{\U{211d} }%
%BeginExpansion
\mathbb{R}
%EndExpansion
^{n}$ is a given operator,$\ $and $\langle\cdot,\cdot\rangle$ denotes the
inner product in $%
%TCIMACRO{\U{211d} }%
%BeginExpansion
\mathbb{R}
%EndExpansion
^{n}$. This problem, denoted by VIP$(\mathcal{F},S)$, is a fundamental problem
in Optimization Theory because many optimization problems can be translated
into VIPs. The VIP was intensively studied in the last decades; see,
\textit{e.g.}, the two-volume book by Facchinei and Pang \cite{Facc}, and the
review papers by Noor \cite{noor04} and by Xiu and Zhang \cite{naihua03}. Some
algorithms for solving (\ref{eq:Classicvip}) fit into the framework of the
following general iterative scheme:%
\begin{equation}
x^{k+1}=P_{S}(x^{k}-\tau_{k}\mathcal{F}(x^{k}))\text{,} \label{eq:aus}%
\end{equation}
where $\tau_{k}\geq0$ and $P_{S}$ is the metric projection\ operator onto $S$
(see Auslender \cite{Auslender} and consult \cite[Volume 2, Subsection
12.1]{Facc} for more details).

Such methods are particularly useful when the set $S$ is simple enough to
project onto. However, in general, one has to solve a minimization problem
(evaluation of the metric projection onto $S$) at each iterative step in order
to get the next iterate. In this case the efficiency of method (\ref{eq:aus})
may be seriously affected. Fukushima \cite{F86} developed a method that
overcomes this obstacle by replacing the metric projection onto the set
$S$\ by a subgradient projection which is easier to calculate. Censor and
Gibali present in \cite{CG} the $\delta$-algorithmic scheme which generalizes
the Auslender and Fukushima algorithms in the sense that there is some
\textquotedblleft freedom\textquotedblright\ in choosing the hyperplane onto
which one projects.

In this paper we present a natural extension of this $\delta$-algorithmic
scheme (Algorithm \ref{alg:ac} below). Given an operator $T:%
%TCIMACRO{\U{211d} }%
%BeginExpansion
\mathbb{R}
%EndExpansion
^{n}\rightarrow%
%TCIMACRO{\U{211d} }%
%BeginExpansion
\mathbb{R}
%EndExpansion
^{n}$, we denote by $\operatorname*{Fix}(T):=\left\{  x\in%
%TCIMACRO{\U{211d} }%
%BeginExpansion
\mathbb{R}
%EndExpansion
^{n}\mid T(x)=x\right\}  $ the fixed point set of $T$. It is well known that
$\operatorname*{Fix}(T)$ is closed and convex if $T$ is quasi-nonexpansive
(see, \textit{e.g.}, \cite[Proposition 2.6 (ii)]{BC01}). Observe that the
feasible set $S$ of the VIP\ in (\ref{eq:Classicvip}) can always be
represented as the fixed point set of some operator, say,
$S=\operatorname*{Fix}\left(  P_{S}\right)  $. Following this idea, Yamada and
Ogura \cite{Y2004a} considered the variational inequality problem
VIP$(\mathcal{F},\operatorname*{Fix}(T))$, which calls for finding a point
$x^{\ast}\in\operatorname*{Fix}(T)$ such that%
\begin{equation}
\left\langle \mathcal{F}(x^{\ast}),x-x^{\ast}\right\rangle \geq0\text{ for all
}x\in\operatorname*{Fix}(T)\text{.} \label{eq:vip}%
\end{equation}
In the case where $T$ is quasi-nonexpansive and so-called quasi-shrinking, an
algorithm for solving (\ref{eq:vip}) in a real Hilbert space was proposed in
\cite{Y2004a} under the conditions of Lipschitz continuity and strong
monotonicity of $\mathcal{F}$. The iterative step of the method is as follows:%
\begin{equation}
x^{k+1}=T(x^{k})-\lambda_{k+1}\mathcal{F}(T(x^{k}))\text{,} \label{eq:yamada}%
\end{equation}
where $\left\{  \lambda_{k}\right\}  _{k=0}^{\infty}$ is a nonnegative
sequence which satisfies certain conditions. As a matter of fact, Yamada and
Ogura \cite[Theorem 5]{Y2004a} showed that (\ref{eq:yamada}) could be applied
to more general cases with weaker monotonicity assumptions, such as
paramonotonicity (see e.g., \cite{yyy}).

In this paper we present a method for solving the VIP$(\mathcal{F}%
,\operatorname*{Fix}(T))$ when the operator $T$ is merely quasi-nonexpansive.
This method generalizes the earlier results of Auslender and Fukushima, as
well as the $\delta$-algorithmic scheme. In addition, we present the
relationship between our algorithm and the Yamada--Ogura method. Note that
several authors have considered the VIP$(\mathcal{F},\operatorname*{Fix}(T))$
for a quasi-nonexpansive operator $T$ and proposed methods similar to
(\ref{eq:yamada}), where $T$ is replaced by a sequence of quasi-nonexpansive
operators $T_{k}$ with the property $\operatorname*{Fix}\left(  T\right)
\subseteq\bigcap_{k\geq0}\operatorname*{Fix}\left(  T_{k}\right)  $ (see
\cite{Bau96, CZ11, Hir06, Y2004a}).

Our paper is organized as follows. Section \ref{Sec:pre} is devoted to some
definitions and preliminary results. Our algorithm is described in Section
\ref{Sec:alg} and analyzed in Section \ref{Sec:con}. Finally, in Section
\ref{Sec:app} we present an application of our results to a hierarchical
optimization problem.

\section{Preliminaries\label{Sec:pre}}

In this section we recall some definitions and properties of several classes
of operators.

\begin{definition}%
%TCIMACRO{\TeXButton{rm}{\rm\ }}%
%BeginExpansion
\rm\
%EndExpansion
Let $T:%
%TCIMACRO{\U{211d} }%
%BeginExpansion
\mathbb{R}
%EndExpansion
^{n}\rightarrow%
%TCIMACRO{\U{211d} }%
%BeginExpansion
\mathbb{R}
%EndExpansion
^{n}$ be an operator with a fixed point. The operator $T$ is called:

\begin{itemize}
\item[(i)] $\alpha$\textit{-}\texttt{strongly quasi-nonexpansive} ($\alpha
$-SQNE), where $\alpha\geq0$, if for all $(x,w)\in%
%TCIMACRO{\U{211d} }%
%BeginExpansion
\mathbb{R}
%EndExpansion
^{n}\times\operatorname*{Fix}(T)$, we have%
\begin{equation}
\left\Vert T\left(  x\right)  -w\right\Vert ^{2}\leq\left\Vert x-w\right\Vert
^{2}-\alpha\left\Vert x-T\left(  x\right)  \right\Vert ^{2}\text{;}%
\end{equation}
If $\alpha>0,$ then we say that $T$ is \texttt{strongly quasi-nonexpansive} (SQNE);

\item[(ii)] \texttt{Firmly quasi-nonexpansive} (FQNE) if it is $1$-SQNE;

\item[(iii)] \texttt{Quasi-nonexpansive}\textit{ }if it is $0$-SQNE, i.e.,%
\begin{equation}
\Vert T\left(  x\right)  -w\Vert\leq\Vert x-w\Vert\text{ for all }(x,w)\in%
%TCIMACRO{\U{211d} }%
%BeginExpansion
\mathbb{R}
%EndExpansion
^{n}\times\operatorname*{Fix}(T)\text{;}%
\end{equation}

\item[(iv)] \texttt{Nonexpansive} if%
\begin{equation}
\Vert T\left(  x\right)  -T\left(  y\right)  \Vert\leq\Vert x-y\Vert\text{ for
all }x,y\in%
%TCIMACRO{\U{211d} }%
%BeginExpansion
\mathbb{R}
%EndExpansion
^{n}\text{.}%
\end{equation}

\end{itemize}
\end{definition}

The class of quasi-nonexpansive operators was denoted by Crombez \cite[p.
161]{Crombez05} by $\mathcal{F}^{0}$. An important subset of $\mathcal{F}^{0}%
$, namely the $\mathfrak{T}$-class operators, was introduced and investigated
by Bauschke and Combettes\textit{ }\cite{BC01}, and by\textit{ }%
Combettes\textit{ }\cite{Co}. The operators in this class were named
\textit{directed operators }in Zaknoon \cite{Z} and further used under this
name in \cite{cs08}. Cegielski \cite{Ceg08} studied these operators under the
name \textit{separating operators}. Since both \textit{directed }%
and\textit{\ separating }are key words of other, widely-used, mathematical
entities, Cegielski and Censor have recently introduced the term
\textit{cutter operators} \cite{cc11}, or cutters in short. This class
coincides with the class $\mathcal{F}^{1}$ (see \cite{Crombez05}), with the
class of $1$-SQNE operators (see \cite[Theorem 2.1.39]{Ceg12}) and with the
class DC$_{\boldsymbol{p}}$ for $\boldsymbol{p}=-1$ \cite{mp08}. The term
\textit{firmly quasi-nonexpansive} (FQNE) for $\mathfrak{T}$-class operators
was used by Yamada and Ogura \cite[Section B]{Yamada, Y2004a} and by
M\u{a}ru\c{s}ter \cite{Maruster05}. This class of operators is fundamental
because it contains several types of operators commonly found in various areas
of applied mathematics, such as the metric projections, subgradient
projections and the resolvents of maximal monotone operators (see
\cite{BC01}). The formal definition of the $\mathfrak{T}$-class in Euclidean
space is as follows.

\begin{definition}%
%TCIMACRO{\TeXButton{rm}{\rm\ }}%
%BeginExpansion
\rm\
%EndExpansion
An operator $T:%
%TCIMACRO{\U{211d} }%
%BeginExpansion
\mathbb{R}
%EndExpansion
^{n}\rightarrow%
%TCIMACRO{\U{211d} }%
%BeginExpansion
\mathbb{R}
%EndExpansion
^{n}$ is called a \texttt{cutter} ($T\in\mathfrak{T}$) if $\operatorname*{Fix}%
(T)\neq\emptyset$ and%
\begin{equation}
\left\langle T\left(  x\right)  -x,T\left(  x\right)  -w\right\rangle
\leq0\text{\ for all }(x,w)\in%
%TCIMACRO{\U{211d} }%
%BeginExpansion
\mathbb{R}
%EndExpansion
^{n}\times\operatorname*{Fix}(T)\text{.} \label{def directed}%
\end{equation}

\end{definition}

For $x,y\in%
%TCIMACRO{\U{211d} }%
%BeginExpansion
\mathbb{R}
%EndExpansion
^{n}$, we denote
\begin{equation}
H(x,y):=\left\{  u\in%
%TCIMACRO{\U{211d} }%
%BeginExpansion
\mathbb{R}
%EndExpansion
^{n}\mid\left\langle u-y,x-y\right\rangle \leq0\right\}  \text{.}
\label{eq:halfspaceH}%
\end{equation}
If $x\neq y$, then $H(x,y)$ is a half-space. It is easy to see that $T$ is a
cutter if and only if%
\begin{equation}
\operatorname*{Fix}(T)\subseteq H(x,T(x))\text{ for all }x\in%
%TCIMACRO{\U{211d} }%
%BeginExpansion
\mathbb{R}
%EndExpansion
^{n}\text{.}%
\end{equation}
This property is illustrated in Figure \ref{fig:1}.%
%TCIMACRO{\FRAME{fhFU}{2.994in}{2.2511in}{0pt}{\Qcb{Property of cutters}%
%}{\Qlb{fig:1}}{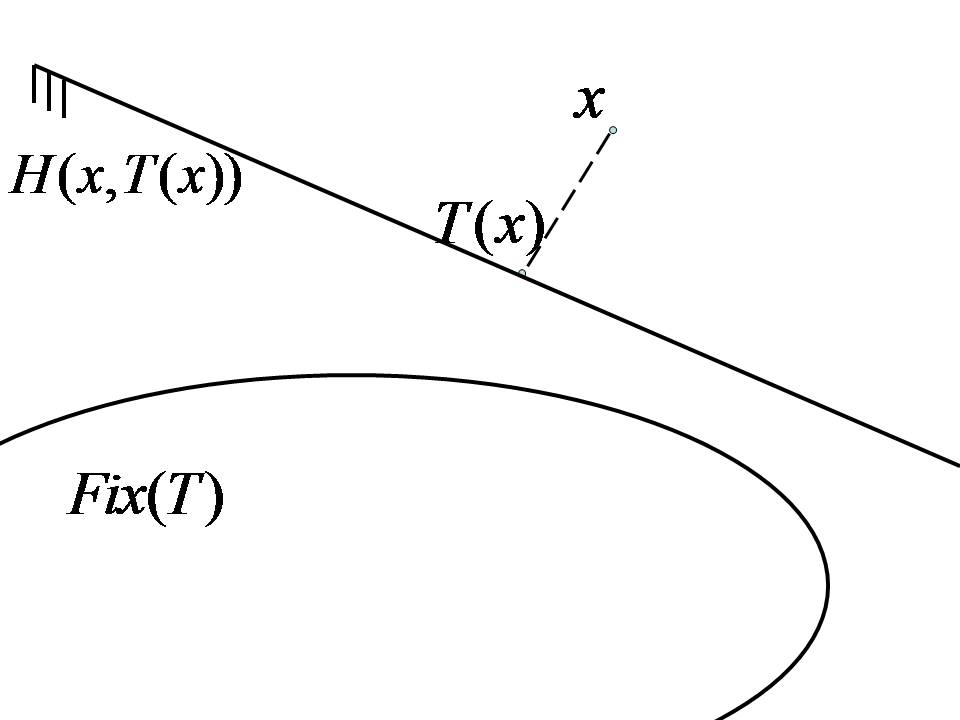}{\special{ language "Scientific Word";
%type "GRAPHIC";  maintain-aspect-ratio TRUE;  display "USEDEF";
%valid_file "F";  width 2.994in;  height 2.2511in;  depth 0pt;
%original-width 9.9998in;  original-height 7.4996in;  cropleft "0";
%croptop "1";  cropright "1";  cropbottom "0";
%filename 'Directed.gif';file-properties "XNPEU";}} }%
%BeginExpansion
\begin{figure}
[h]
\begin{center}
\includegraphics[
natheight=7.499600in,
natwidth=9.999800in,
height=2.2511in,
width=2.994in
]%
{}%
\caption{Property of cutters}%
\label{fig:1}%
\end{center}
\end{figure}
%EndExpansion

\begin{definition}%
%TCIMACRO{\TeXButton{rm}{\rm\ }}%
%BeginExpansion
\rm\
%EndExpansion
Let $T:%
%TCIMACRO{\U{211d} }%
%BeginExpansion
\mathbb{R}
%EndExpansion
^{n}\rightarrow%
%TCIMACRO{\U{211d} }%
%BeginExpansion
\mathbb{R}
%EndExpansion
^{n}$ be a cutter and $\alpha\in\lbrack0,2]$. The operator $T_{\alpha
}:=I+\alpha(T-I)$, where $I$ denotes the identity operator, is called an
$\alpha$\textit{-}\texttt{relaxed cutter.}
\end{definition}

Bauschke and Combettes \cite{BC01} established the following two properties of cutters.

\begin{itemize}
\item[(i)] The set $\operatorname*{Fix}(T)$ of all fixed points of a cutter
operator $T$ is closed and convex because%
\begin{equation}
\operatorname*{Fix}(T)=\cap_{x\in%
%TCIMACRO{\U{211d} }%
%BeginExpansion
\mathbb{R}
%EndExpansion
^{n}}H\left(  x,T\left(  x\right)  \right)  \text{.}%
\end{equation}

\item[(ii)] If $T\in\mathfrak{T}$ and $\alpha\in\lbrack0,1]$, then $T_{\alpha
}\in\mathfrak{T}$.
\end{itemize}

One can easily verify the following characterization of $\alpha$-relaxed
cutter operators $U$ ($U=T_{\alpha}$):%
\begin{equation}
\alpha\left\langle U\left(  x\right)  -x,w-x\right\rangle \geq\Vert U\left(
x\right)  -x\Vert^{2}\text{\ for all }(x,w)\in%
%TCIMACRO{\U{211d} }%
%BeginExpansion
\mathbb{R}
%EndExpansion
^{n}\times\operatorname*{Fix}(T)\text{.}%
\end{equation}

\begin{theorem}
\label{Lem:2}Let $\alpha\in(0,2]$. An operator $U:%
%TCIMACRO{\U{211d} }%
%BeginExpansion
\mathbb{R}
%EndExpansion
^{n}\rightarrow%
%TCIMACRO{\U{211d} }%
%BeginExpansion
\mathbb{R}
%EndExpansion
^{n}$ is a cutter if and only if $U_{\alpha}$ is $\left(  \left(
2-\alpha\right)  /\alpha\right)  $-strongly quasi-nonexpansive
\end{theorem}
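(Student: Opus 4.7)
The plan is to reduce the biconditional to an algebraic identity and then invoke the characterization of $\alpha$-relaxed cutters displayed just before the theorem. Since $\alpha>0$, one first observes that $\operatorname{Fix}(U_\alpha)=\operatorname{Fix}(U)$, so the condition $w\in\operatorname{Fix}(U)$ in the SQNE inequality matches the condition $w\in\operatorname{Fix}(U)$ appearing in the cutter inequality.

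Next, for an arbitrary $x\in\mathbb{R}^n$ and $w\in\operatorname{Fix}(U)$, I would expand
\begin{equation*}
\|U_\alpha(x)-w\|^2 = \|x-w\|^2 + 2\langle U_\alpha(x)-x,\,x-w\rangle + \|U_\alpha(x)-x\|^2.
\end{equation*}
Substituting this into the defining inequality
\begin{equation*}
\|U_\alpha(x)-w\|^2 \;\leq\; \|x-w\|^2 - \tfrac{2-\alpha}{\alpha}\,\|U_\alpha(x)-x\|^2
\end{equation*}
of $((2-\alpha)/\alpha)$-strong quasi-nonexpansivity, the $\|x-w\|^2$ terms cancel and, after collecting the $\|U_\alpha(x)-x\|^2$ coefficients (which combine as $1+\tfrac{2-\alpha}{\alpha}=\tfrac{2}{\alpha}$), the inequality becomes equivalent to
\begin{equation*}
\alpha\,\langle U_\alpha(x)-x,\,w-x\rangle \;\geq\; \|U_\alpha(x)-x\|^2.
\end{equation*}

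This last inequality is exactly the characterization of $\alpha$-relaxed cutters recalled in the display immediately preceding the theorem, and by that characterization it holds for all $(x,w)\in\mathbb{R}^n\times\operatorname{Fix}(U)$ if and only if $U$ itself is a cutter. Reading the chain of equivalences in both directions yields the stated biconditional. There is no genuine obstacle here; the only thing to watch is the bookkeeping of the coefficient $\tfrac{2-\alpha}{\alpha}$ and the sign change when passing from $\langle\cdot,x-w\rangle$ to $\langle\cdot,w-x\rangle$, together with the observation that at the endpoint $\alpha=2$ the SQNE constant degenerates to $0$, consistently with the fact that a $2$-relaxed cutter need only be quasi-nonexpansive.
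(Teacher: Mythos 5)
Your argument is correct, and it is worth noting that the paper itself does not prove this theorem at all: it simply cites Combettes [Proposition 2.3 (ii)] and Cegielski [Theorem 2.1.39]. What you supply is a self-contained proof, and it is essentially the standard one underlying those references: expand $\|U_\alpha(x)-w\|^2=\|x-w\|^2+2\langle U_\alpha(x)-x,x-w\rangle+\|U_\alpha(x)-x\|^2$, cancel $\|x-w\|^2$, combine the coefficients $1+\tfrac{2-\alpha}{\alpha}=\tfrac{2}{\alpha}$, and multiply by $\alpha/2>0$ to arrive at $\alpha\langle U_\alpha(x)-x,w-x\rangle\geq\|U_\alpha(x)-x\|^2$, which is the relaxed-cutter characterization displayed just before the theorem; together with $\operatorname{Fix}(U_\alpha)=\operatorname{Fix}(U)$ for $\alpha>0$ this gives the biconditional, including the degenerate case $\alpha=2$. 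The bookkeeping is right. Two small points would make it fully airtight. First, the displayed characterization you invoke is itself stated in the paper only with the remark ``one can easily verify''; since your whole equivalence hinges on it being an \emph{if and only if}, it would cost you one line to verify it: $U_\alpha(x)-x=\alpha\bigl(U(x)-x\bigr)$, so dividing by $\alpha^2>0$ the inequality $\alpha\langle U_\alpha(x)-x,w-x\rangle\geq\|U_\alpha(x)-x\|^2$ becomes $\langle U(x)-x,w-x\rangle\geq\|U(x)-x\|^2$, which is precisely the cutter inequality $\langle U(x)-x,U(x)-w\rangle\leq0$ rearranged. Second, both the cutter definition and the SQNE definition presuppose a nonempty fixed point set; your observation $\operatorname{Fix}(U_\alpha)=\operatorname{Fix}(U)$ already transfers nonemptiness in both directions, but saying so explicitly closes the loop.
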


\begin{proof}
See, \textit{e.g.}, \cite[Proposition 2.3 (ii)]{Co} and \cite[Theorem
2.1.39]{Ceg12}.
\end{proof}

\begin{notation}%
%TCIMACRO{\TeXButton{rm}{\rm\ }}%
%BeginExpansion
\rm\
%EndExpansion
Let $C$ be a nonempty, closed and convex subset of $%
%TCIMACRO{\U{211d} }%
%BeginExpansion
\mathbb{R}
%EndExpansion
^{n}$, and let $T\colon\mathbb{R}^{n} \rightarrow\mathbb{R}^{n}$ be quasi-nonexpansive.

\begin{itemize}
\item[(i)] The \texttt{distance function} of a point $x\in%
%TCIMACRO{\U{211d} }%
%BeginExpansion
\mathbb{R}
%EndExpansion
^{n}$ to $C$ is defined by%
\begin{equation}
\operatorname*{dist}(x,C):=\inf\{\Vert x-z\Vert\mid z\in C\}\text{.}%
\end{equation}

\item[(ii)] For$\ r\geq0$ we define the subsets%
\begin{equation}
C^{r}:=\left\{  u\in%
%TCIMACRO{\U{211d} }%
%BeginExpansion
\mathbb{R}
%EndExpansion
^{n}\mid\operatorname*{dist}(u,C)\geq r\right\}
\end{equation}
and%
\begin{equation}
C_{r}:=\{u\in%
%TCIMACRO{\U{211d} }%
%BeginExpansion
\mathbb{R}
%EndExpansion
^{n}\mid\operatorname*{dist}(u,C)\leq r\}\text{.}%
\end{equation}

\item[(iii)] Define the function $D\colon\lbrack0,\infty)\rightarrow
\lbrack0,\infty]$ in the following way:
\begin{equation}
D(r):=%
\begin{cases}
\underset{u\in(\operatorname{Fix}(T))^{r}\cap C}{\inf}\left(
\operatorname{dist}(u,\operatorname{Fix}(T))-\operatorname{dist}%
(T(u),\operatorname{Fix}(T))\right) \\
\text{if $(\operatorname{Fix}(T))^{r}\cap C\neq\emptyset$,}\\
+\infty\text{, otherwise.}%
\end{cases}
\label{eq:2.12}%
\end{equation}

\item[(iv)] Denote the metric projection onto $\operatorname{Fix}(T)$ by $R$,
i.e., $R:=P_{\operatorname{Fix}(T)}$.
\end{itemize}
\end{notation}

It is well known that, for a convex subset $C$, the (continuous) distance
function $\operatorname*{dist}(\cdot,C)$ is convex. Consequently, $C_{r}$ is
closed and convex as a sublevel set of a convex function. The continuity of
$\operatorname{dist}(\cdot,C)$ implies that the subset $C^{r}$ is also closed.

In what follows we assume that $T:\mathbb{R}^{n}\rightarrow\mathbb{R}^{n}$ is
quasi-nonexpansive and $C\subseteq\mathbb{R}^{n}$ is a closed and convex set
such that $\operatorname{Fix}(T)\cap C\neq\emptyset$.

Parts (i) and (ii) of the following theorem can be found in \cite[Lemma
2]{Y2004a}.

\begin{proposition}
\label{proposition:D}The function $D:[0,\infty)\rightarrow\lbrack0,\infty]$
defined by $\mathrm{(\ref{eq:2.12})}$ has the following properties:

\begin{itemize}
\item[$\mathrm{(i)}$] $D(0)=0$ and $D(r)\geq0$ for all $r\geq0$;

\item[$\mathrm{(ii)}$] if $r_{1}\geq r_{2}\geq0,$ then $D(r_{1})\geq D(r_{2})$;

\item[$\mathrm{(iii)}$] $D(\operatorname*{dist}(x,\operatorname*{Fix}%
(T)))\leq\Vert x-T\left(  x\right)  \Vert$ for all $x\in C$.
\end{itemize}
\end{proposition}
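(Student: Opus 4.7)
I would prove the three parts in order, using only the quasi-nonexpansivity of $T$, elementary properties of the distance function, and the definition of the superlevel set $(\operatorname{Fix}(T))^r$.

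For (i), my first observation is that if $w=R(u)\in\operatorname{Fix}(T)$ is the projection of any point $u\in\mathbb{R}^n$ onto $\operatorname{Fix}(T)$, then quasi-nonexpansivity of $T$ gives $\|T(u)-w\|\le\|u-w\|=\operatorname{dist}(u,\operatorname{Fix}(T))$, so that $\operatorname{dist}(T(u),\operatorname{Fix}(T))\le\operatorname{dist}(u,\operatorname{Fix}(T))$. Hence every value under the infimum in \eqref{eq:2.12} is nonnegative, which shows $D(r)\ge0$ for all $r\ge0$. For $r=0$, we have $(\operatorname{Fix}(T))^{0}=\mathbb{R}^{n}$, so the feasibility assumption $\operatorname{Fix}(T)\cap C\ne\emptyset$ gives a candidate $u\in\operatorname{Fix}(T)\cap C$ for which both distances vanish; this yields $D(0)\le0$, hence $D(0)=0$.

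For (ii), the key monotonicity is that of the distance function itself: if $r_{1}\ge r_{2}\ge0$, then $(\operatorname{Fix}(T))^{r_{1}}\subseteq(\operatorname{Fix}(T))^{r_{2}}$, and therefore $(\operatorname{Fix}(T))^{r_{1}}\cap C\subseteq(\operatorname{Fix}(T))^{r_{2}}\cap C$. Taking an infimum over a smaller set cannot decrease the value, and the case where $(\operatorname{Fix}(T))^{r_{1}}\cap C=\emptyset$ is handled by the convention $D(r_{1})=+\infty$. Either way, $D(r_{1})\ge D(r_{2})$.

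For (iii), I would set $r:=\operatorname{dist}(x,\operatorname{Fix}(T))$. By definition, $x\in(\operatorname{Fix}(T))^{r}\cap C$, so this set is nonempty and we may evaluate the infimum in \eqref{eq:2.12} at the specific point $u=x$, obtaining
\begin{equation*}
D(r)\le\operatorname{dist}(x,\operatorname{Fix}(T))-\operatorname{dist}(T(x),\operatorname{Fix}(T)).
\end{equation*}
The remaining step is to bound the right-hand side by $\|x-T(x)\|$. This is the triangle inequality for the distance function: for every $w\in\operatorname{Fix}(T)$,
\begin{equation*}
\operatorname{dist}(x,\operatorname{Fix}(T))\le\|x-w\|\le\|x-T(x)\|+\|T(x)-w\|,
\end{equation*}
and taking the infimum over $w\in\operatorname{Fix}(T)$ on the right yields $\operatorname{dist}(x,\operatorname{Fix}(T))-\operatorname{dist}(T(x),\operatorname{Fix}(T))\le\|x-T(x)\|$. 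Combining the two inequalities gives (iii).

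The argument is essentially bookkeeping; the only subtle point is that part (i) quietly requires quasi-nonexpansivity (to know that applying $T$ does not increase the distance to $\operatorname{Fix}(T)$), while parts (ii) and (iii) use only general properties of $\operatorname{dist}(\cdot,\operatorname{Fix}(T))$. I do not anticipate a genuine obstacle.
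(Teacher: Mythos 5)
Your proof is correct and follows essentially the same route as the paper's: quasi-nonexpansivity plus the metric projection to get nonnegativity of the integrand in (i), a fixed point in $\operatorname{Fix}(T)\cap C$ to force $D(0)\le 0$, the inclusion $(\operatorname{Fix}(T))^{r_1}\subseteq(\operatorname{Fix}(T))^{r_2}$ for (ii) (which you state correctly; the paper's proof has the inclusion written backwards), and evaluation of the infimum at $u=x$ combined with the triangle inequality for (iii). The only cosmetic difference is that you handle $r=0$ uniformly in (iii) where the paper splits off that case, which is fine since $x\in(\operatorname{Fix}(T))^{r}\cap C$ holds there as well.
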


\begin{proof}
(i) Let $r\geq0$. We prove that $D(r)\geq0$. The inequality is clear if
$(\operatorname{Fix}(T))^{r}\cap C=\emptyset$. Now suppose that
$(\operatorname{Fix}(T))^{r}\cap C\neq\emptyset$. Then the definition of the
metric projection and the quasi-nonexpansivity of $T$ yield, for any
$x\in(\operatorname{Fix}(T))^{r}\cap C$,
\begin{equation}
\left\Vert x-R\left(  x\right)  \right\Vert -\left\Vert T\left(  x\right)
-R\left(  T\left(  x\right)  \right)  \right\Vert \geq\left\Vert x-R\left(
x\right)  \right\Vert -\left\Vert T\left(  x\right)  -R\left(  x\right)
\right\Vert \geq0.
\end{equation}
Consequently, $D(r)\geq0$. Let $x\in\operatorname{Fix}(T)\cap C$. By the
quasi-nonexpansivity of $T$, we have $T\left(  x\right)  =x$ and
\begin{equation}
D(0)\leq\left\Vert x-R\left(  x\right)  \right\Vert -\left\Vert T\left(
x\right)  -R\left(  T\left(  x\right)  \right)  \right\Vert =0,
\end{equation}
which together with the first part proves that $D(0)=0$.

(ii) Let $r_{1}\geq r_{1}\geq0$. Then, of course, $(\operatorname{Fix}%
(T))^{r_{2}}\subseteq(\operatorname{Fix}(T))^{r_{1}}$ and so the property is clear.

(iii) Let $x\in C$ and $r=\operatorname*{dist}(x,\operatorname{Fix}(T))$. If
$r=0$, then $T\left(  x\right)  =x$ and, by (i), the assertion is obvious. Let
$r>0$. Then, of course, $x\in(\operatorname{Fix}(T))^{r}\cap C$ and, by the
definition of the metric projection and the triangle inequality, we have
\begin{align}
D(r)  &  \leq\left\Vert x-R\left(  x\right)  \right\Vert -\left\Vert T\left(
x\right)  -R\left(  T\left(  x\right)  \right)  \right\Vert \nonumber\\
&  \leq\left\Vert x-R\left(  T\left(  x\right)  \right)  \right\Vert
-\left\Vert T\left(  x\right)  -R\left(  T\left(  x\right)  \right)
\right\Vert \leq\left\Vert x-T\left(  x\right)  \right\Vert ,
\end{align}
and the proof is complete.
\end{proof}

Now we give two equivalent definitions of a quasi-shrinking operator.

\begin{definition}%
%TCIMACRO{\TeXButton{rm}{\rm\ }}%
%BeginExpansion
\rm\
%EndExpansion
\label{def:qs}(cf. \cite{Y2004a}) Let $C\subseteq%
%TCIMACRO{\U{211d} }%
%BeginExpansion
\mathbb{R}
%EndExpansion
^{n}$ be closed and convex, and let $T:\mathbb{R}^{n}\rightarrow\mathbb{R}%
^{n}$ be a quasi-nonexpansive operator. The operator $T$ is called
\texttt{quasi-shrinking} on $C$ if $D(r)=0\Leftrightarrow r=0$.$\smallskip$
\end{definition}

\begin{definition}%
%TCIMACRO{\TeXButton{rm}{\rm\ }}%
%BeginExpansion
\rm\
%EndExpansion
\label{def:qs-1}Let $C\subseteq%
%TCIMACRO{\U{211d} }%
%BeginExpansion
\mathbb{R}
%EndExpansion
^{n}$ be closed and convex, and let $T:%
%TCIMACRO{\U{211d} }%
%BeginExpansion
\mathbb{R}
%EndExpansion
^{n}\rightarrow%
%TCIMACRO{\U{211d} }%
%BeginExpansion
\mathbb{R}
%EndExpansion
^{n}$ be a quasi-nonexpansive operator. We say that $T$ is
\texttt{quasi-shrinking} on $C$ if for any sequence $\{u^{k}\}_{k=0}^{\infty
}\subseteq C$, the following implication holds:%
\begin{equation}
\underset{k\rightarrow\infty}{\lim}\left(  \Vert u^{k}-R\left(  u^{k}\right)
\Vert-\Vert T\left(  u^{k}\right)  -R\left(  T\left(  u^{k}\right)  \right)
\Vert\right)  =0\Rightarrow\underset{k\rightarrow\infty}{\lim}\Vert
u^{k}-R\left(  u^{k}\right)  \Vert=0\text{.}%
\end{equation}

\end{definition}

\begin{proposition}
Let $C\subseteq%
%TCIMACRO{\U{211d} }%
%BeginExpansion
\mathbb{R}
%EndExpansion
^{n}$ be closed and convex, and let a quasi-nonexpansive operator $T:%
%TCIMACRO{\U{211d} }%
%BeginExpansion
\mathbb{R}
%EndExpansion
^{n}\rightarrow%
%TCIMACRO{\U{211d} }%
%BeginExpansion
\mathbb{R}
%EndExpansion
^{n}$ be such that $\operatorname*{Fix}(T)\cap C\neq\emptyset$. Then
Definitions $\mathrm{\ref{def:qs}}$ and $\mathrm{\ref{def:qs-1}}$ are equivalent.
\end{proposition}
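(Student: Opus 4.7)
The plan is to prove the two definitions equivalent by unwinding each one through the identity $\operatorname{dist}(u,\operatorname{Fix}(T))=\|u-R(u)\|$ and an elementary sequence/contrapositive argument, together with the already-established fact $D(0)=0$ and nonnegativity of $D$ from Proposition \ref{proposition:D}. In both directions, the membership $u\in(\operatorname{Fix}(T))^r$ translates exactly into $\|u-R(u)\|\geq r$, and that single observation is what links the scalar function $D$ to sequential behaviour.

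First, I would show that Definition \ref{def:qs} implies Definition \ref{def:qs-1}. Take any sequence $\{u^k\}\subseteq C$ with $\lim_{k\to\infty}\bigl(\|u^k-R(u^k)\|-\|T(u^k)-R(T(u^k))\|\bigr)=0$ and suppose, for contradiction, that $\|u^k-R(u^k)\|\not\to 0$. Then there exist $\varepsilon>0$ and a subsequence $\{u^{k_j}\}$ with $\|u^{k_j}-R(u^{k_j})\|\geq\varepsilon$, so $u^{k_j}\in(\operatorname{Fix}(T))^{\varepsilon}\cap C$. Applying the definition of $D(\varepsilon)$ as an infimum,
\begin{equation}
D(\varepsilon)\leq\|u^{k_j}-R(u^{k_j})\|-\|T(u^{k_j})-R(T(u^{k_j}))\|,
\end{equation}
and letting $j\to\infty$ gives $D(\varepsilon)\leq 0$. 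Combined with $D(\varepsilon)\geq 0$ from Proposition \ref{proposition:D}(i), this yields $D(\varepsilon)=0$; by Definition \ref{def:qs} this forces $\varepsilon=0$, a contradiction.

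For the reverse direction, assume Definition \ref{def:qs-1} and take $r\geq 0$ with $D(r)=0$. Since $D(r)=+\infty$ whenever $(\operatorname{Fix}(T))^r\cap C=\emptyset$, we must have $(\operatorname{Fix}(T))^r\cap C\neq\emptyset$, and by the definition of the infimum there exists a sequence $\{u^k\}\subseteq(\operatorname{Fix}(T))^r\cap C$ with
\begin{equation}
\lim_{k\to\infty}\bigl(\|u^k-R(u^k)\|-\|T(u^k)-R(T(u^k))\|\bigr)=0.
\end{equation}
Definition \ref{def:qs-1} then gives $\lim_{k\to\infty}\|u^k-R(u^k)\|=0$. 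But $u^k\in(\operatorname{Fix}(T))^r$ means $\|u^k-R(u^k)\|\geq r$ for all $k$, so $r\leq 0$ and hence $r=0$. The converse implication $r=0\Rightarrow D(r)=0$ is already recorded in Proposition \ref{proposition:D}(i), so Definition \ref{def:qs} holds.

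I do not expect any serious obstacle: everything reduces to the two trivial but essential observations that $u\in(\operatorname{Fix}(T))^r$ is the same as $\|u-R(u)\|\geq r$, and that the infimum in the definition of $D$ is realized along a sequence. The only mild subtlety worth stating carefully is the case analysis at $r=0$ and the $+\infty$ convention when $(\operatorname{Fix}(T))^r\cap C=\emptyset$, both of which are handled by invoking Proposition \ref{proposition:D}(i).
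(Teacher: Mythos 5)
Your proposal is correct and follows essentially the same argument as the paper: both directions hinge on the equivalence $u\in(\operatorname{Fix}(T))^{r}\Leftrightarrow\Vert u-R(u)\Vert\geq r$, a subsequence bounded away from zero compared against the infimum defining $D(\varepsilon)$ in one direction, and a minimizing sequence for $D(r)=0$ fed into Definition \ref{def:qs-1} in the other. The only differences are cosmetic: you phrase the first direction as a contradiction where the paper argues contrapositively, and you make explicit the $+\infty$ convention when $(\operatorname{Fix}(T))^{r}\cap C=\emptyset$.
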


\begin{proof}
Let $T$ be quasi-shrinking in the sense of Definition \ref{def:qs} and choose
$\{u^{k}\}_{k=0}^{\infty}\subseteq C$. Suppose that $\lim_{k\rightarrow\infty
}\Vert u^{k}-R\left(  u^{k}\right)  \Vert\neq0$. Then there exist a constant
$\varepsilon>0$ and a subsequence $\{u^{k_{j}}\}_{j=0}^{\infty}\subseteq
\{u^{k}\}_{k=0}^{\infty}$ such that $\Vert u^{k_{j}}-R\left(  u^{k_{j}%
}\right)  \Vert>\varepsilon$. Therefore $u^{k_{j}}\in(\operatorname*{Fix}%
(T))^{\varepsilon}\cap C$ and we have%
\begin{align}
&  \inf_{j\geq0}(\Vert u^{k_{j}}-R\left(  u^{k_{j}}\right)  \Vert-\Vert
T\left(  u^{k_{j}}\right)  -R\left(  T\left(  u^{k_{j}}\right)  \right)
\Vert)\nonumber\\
&  \geq\inf_{u\in(\operatorname*{Fix}(T))^{\varepsilon}\cap C}(\Vert
u-R\left(  u\right)  \Vert-\Vert T\left(  u\right)  -R\left(  T\left(
u\right)  \right)  \Vert)\nonumber\\
&  =D(\varepsilon)>0\text{.}%
\end{align}
Consequently, $\lim_{k\rightarrow\infty}\left(  \Vert u^{k}-R\left(
u^{k}\right)  \Vert-\Vert T\left(  u^{k}\right)  -R\left(  T\left(
u^{k}\right)  \right)  \Vert\right)  \neq0$ if it exists.

In the other direction, let $T$ be quasi-shrinking in the sense of Definition
\ref{def:qs-1}. Suppose that $D(r)=0$ for some $r\geq0$. Then there is a
sequence $\{u^{k}\}_{k=0}^{\infty}\subseteq(\operatorname*{Fix}(T))^{r}\cap C$
such that
\begin{equation}
\lim_{k\rightarrow\infty}(\Vert u^{k}-R\left(  u^{k}\right)  \Vert-\Vert
T\left(  u^{k}\right)  -R\left(  T\left(  u^{k}\right)  \right)
\Vert)=0\text{.}%
\end{equation}
By Definition \ref{def:qs-1}, we have
\begin{equation}
r\leq\lim_{k\rightarrow\infty}\operatorname*{dist}(u^{k},\operatorname*{Fix}%
(T))=\lim_{k\rightarrow\infty}\Vert u^{k}-R\left(  u^{k}\right)
\Vert=0\text{,}%
\end{equation}
i.e., $r=0$ and the proof is complete.
\end{proof}

\begin{definition}%
%TCIMACRO{\TeXButton{rm}{\rm\ }}%
%BeginExpansion
\rm\
%EndExpansion
Let $C\subseteq\mathbb{R}^{n}$ be closed and convex. An operator
$T:C\rightarrow%
%TCIMACRO{\U{211d} }%
%BeginExpansion
\mathbb{R}
%EndExpansion
^{n}$ is called \texttt{closed} at $y\in%
%TCIMACRO{\U{211d} }%
%BeginExpansion
\mathbb{R}
%EndExpansion
^{n}$ if for any sequence $\left\{  x^{k}\right\}  _{k=0}^{\infty}\subseteq
C$, we have%
\begin{equation}
\left(  \underset{k\rightarrow\infty}{\lim}x^{k}=x\in C\text{ and }%
\underset{k\rightarrow\infty}{\lim}T(x^{k})=y\right)  \Rightarrow
T(x)=y\text{.}%
\end{equation}
We say that the \textit{closedness principle} holds for an operator
$T:C\rightarrow%
%TCIMACRO{\U{211d} }%
%BeginExpansion
\mathbb{R}
%EndExpansion
^{n}$ if $I-T$ is closed at $0$ (see \cite{Browder}), i.e.,
\begin{equation}
\left(  \underset{k\rightarrow\infty}{\lim}x^{k}=x\in C\text{ and }%
\underset{k\rightarrow\infty}{\lim}\left\Vert T(x^{k})-x^{k}\right\Vert
=0\right)  \Rightarrow x\in\operatorname*{Fix}\left(  T\right)  .
\end{equation}

\end{definition}

It is clear that in $%
%TCIMACRO{\U{211d} }%
%BeginExpansion
\mathbb{R}
%EndExpansion
^{n}$ a continuous operator, in particular a nonexpansive one, satisfies the
closedness principle. Later in this section we give other examples of
operators satisfying the closedness principle (see Examples \ref{Ex:cut-cl},
\ref{Exmpl. subgradients} and \ref{def. E-delta op}).

\begin{proposition}
\label{prop:1}Let $C\subseteq%
%TCIMACRO{\U{211d} }%
%BeginExpansion
\mathbb{R}
%EndExpansion
^{n}$ be closed, bounded and convex and let $T:%
%TCIMACRO{\U{211d} }%
%BeginExpansion
\mathbb{R}
%EndExpansion
^{n}\rightarrow%
%TCIMACRO{\U{211d} }%
%BeginExpansion
\mathbb{R}
%EndExpansion
^{n}$ be an operator with $\operatorname*{Fix}(T)\cap C\neq\emptyset$. If $T$
is SQNE (equivalently, an $\alpha$-relaxed cutter for some $\alpha\in(0,2)$)
and $I-T$ is closed at $0$, then $T$ is quasi-shrinking on $C$.
\end{proposition}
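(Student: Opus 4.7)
The plan is to prove the contrapositive of the implication in Definition \ref{def:qs}: if $r>0$, then $D(r)>0$. So I assume, aiming for a contradiction, that $D(r)=0$ for some $r>0$. By the definition of $D$, there exists a sequence $\{u^{k}\}\subseteq(\operatorname*{Fix}(T))^{r}\cap C$ with
\[
\Vert u^{k}-R(u^{k})\Vert - \Vert T(u^{k})-R(T(u^{k}))\Vert \longrightarrow 0.
\]
The first reduction will be to replace $R(T(u^{k}))$ by $R(u^{k})$ in this difference. Since $R$ is the metric projection onto $\operatorname*{Fix}(T)$, we have $\Vert T(u^{k})-R(T(u^{k}))\Vert\leq \Vert T(u^{k})-R(u^{k})\Vert$, and since $T$ is quasi-nonexpansive and $R(u^{k})\in\operatorname*{Fix}(T)$ we also have $\Vert T(u^{k})-R(u^{k})\Vert\leq \Vert u^{k}-R(u^{k})\Vert$. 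Sandwiching therefore yields
\[
\Vert u^{k}-R(u^{k})\Vert - \Vert T(u^{k})-R(u^{k})\Vert \longrightarrow 0.
\]

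The second (and main) step is to deduce $\Vert u^{k}-T(u^{k})\Vert \to 0$ from the SQNE property. Applying the $\alpha$-SQNE inequality with $w=R(u^{k})$ gives
\[
\alpha\Vert u^{k}-T(u^{k})\Vert^{2} \leq \Vert u^{k}-R(u^{k})\Vert^{2}-\Vert T(u^{k})-R(u^{k})\Vert^{2},
\]
and the right-hand side factors as
\[
\bigl(\Vert u^{k}-R(u^{k})\Vert-\Vert T(u^{k})-R(u^{k})\Vert\bigr)\bigl(\Vert u^{k}-R(u^{k})\Vert+\Vert T(u^{k})-R(u^{k})\Vert\bigr).
\]
The first factor tends to $0$ by the previous step. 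For the second factor, boundedness of $C$ together with $\operatorname*{Fix}(T)\cap C\neq\emptyset$ (pick any $z\in\operatorname*{Fix}(T)\cap C$ and use $\Vert u^{k}-R(u^{k})\Vert\leq\Vert u^{k}-z\Vert$, and similarly for $T(u^{k})$ by quasi-nonexpansivity) gives uniform boundedness. Hence $\Vert u^{k}-T(u^{k})\Vert\to 0$.

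The final step is to extract a convergent subsequence and invoke the closedness principle. Since $C$ is closed and bounded (hence compact in $\mathbb{R}^{n}$), some subsequence $u^{k_{j}}\to u^{\ast}\in C$. By the previous step, $\Vert T(u^{k_{j}})-u^{k_{j}}\Vert\to 0$, so closedness of $I-T$ at $0$ forces $u^{\ast}\in\operatorname*{Fix}(T)$, that is, $\operatorname*{dist}(u^{\ast},\operatorname*{Fix}(T))=0$. But $\operatorname*{dist}(\cdot,\operatorname*{Fix}(T))$ is continuous and $\operatorname*{dist}(u^{k_{j}},\operatorname*{Fix}(T))\geq r$ for every $j$, so in the limit $\operatorname*{dist}(u^{\ast},\operatorname*{Fix}(T))\geq r>0$, a contradiction. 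Combined with $D(0)=0$ from Proposition \ref{proposition:D}(i), this establishes $D(r)=0\Leftrightarrow r=0$, so $T$ is quasi-shrinking on $C$.

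The main obstacle is the middle step, where the key insight is that the SQNE inequality allows us to convert the scalar gap $\Vert u^{k}-R(u^{k})\Vert-\Vert T(u^{k})-R(u^{k})\Vert$ into control on the vector displacement $\Vert u^{k}-T(u^{k})\Vert$ via a difference-of-squares factorization; boundedness of $C$ is precisely what is needed to make the companion factor harmless. The bracketed parenthetical remark in the statement (SQNE is equivalent to being an $\alpha$-relaxed cutter for some $\alpha\in(0,2)$) follows at once from Theorem \ref{Lem:2}, so no separate argument is needed for that equivalence.
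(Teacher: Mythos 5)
Your proof is correct and follows essentially the same route as the paper's: reduce the gap involving $R(T(u^{k}))$ to one involving $R(u^{k})$, use the SQNE inequality with a difference-of-squares factorization plus boundedness of $C$ to get $\Vert u^{k}-T(u^{k})\Vert\to0$, then extract a convergent subsequence and apply closedness of $I-T$ at $0$. The only cosmetic differences are your contradiction framing and the use of continuity of $\operatorname{dist}(\cdot,\operatorname{Fix}(T))$ at the end, where the paper bounds $r\leq\Vert u^{k_{j}}-u^{\ast}\Vert\to0$ directly.
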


\begin{proof}
Let $r\geq0$ and $D(r)=0\,$. Then there is a sequence $\{u^{k}\}_{k=0}%
^{\infty}\subseteq(\operatorname*{Fix}(T))^{r}\cap C$ such that%
\begin{equation}
\lim_{k\rightarrow\infty}(\Vert u^{k}-R(u^{k})\Vert-\Vert T(u^{k})-R\left(
T(u^{k})\right)  \Vert)=0\text{.} \label{1}%
\end{equation}
By the quasi-nonexpansivity of $T$, the definition of the metric projection
and by (\ref{1}), we have
\begin{align}
0  &  \leq\Vert u^{k}-R(u^{k})\Vert-\Vert T(u^{k})-R(u^{k})\Vert\nonumber\\
&  \leq\Vert u^{k}-R(u^{k})\Vert-\Vert T(u^{k})-R\left(  T(u^{k})\right)
\Vert\rightarrow0\text{.}%
\end{align}
Consequently,
\begin{equation}
0\leq\Vert u^{k}-R(u^{k})\Vert-\Vert T(u^{k})-R(u^{k})\Vert\rightarrow
0\text{.} \label{2}%
\end{equation}
Since $T$ is SQNE, there is $\alpha>0$ such that
\begin{equation}
\Vert T(u^{k})-R(u^{k})\Vert^{2}\leq\Vert u^{k}-R(u^{k})\Vert^{2}-\alpha\Vert
T(u^{k})-u^{k}\Vert^{2}\text{.} \label{3}%
\end{equation}
Let $z\in\operatorname*{Fix}(T)$. By the boundedness of $\{u^{k}%
\}_{k=0}^{\infty}$, there exists $d>0$ such that $\Vert u^{k}-z\Vert\leq d$
for all $k\geq0$. Using the definition of the metric projection and the
quasi-nonexpansivity of $T$, we obtain%
\begin{align}
\Vert u^{k}-R(u^{k})\Vert+\Vert T(u^{k})-R(u^{k})\Vert &  \leq\Vert
u^{k}-z\Vert+\Vert u^{k}-R(u^{k})\Vert\nonumber\\
&  \leq2\Vert u^{k}-z\Vert\leq2d\text{.} \label{4}%
\end{align}
By (\ref{2}), (\ref{3}) and (\ref{4}), we now have
\begin{align}
&  \Vert T(u^{k})-u^{k}\Vert^{2}\nonumber\\
&  \leq\frac{1}{\alpha}(\Vert u^{k}-R(u^{k})\Vert^{2}-\Vert T(u^{k}%
)-R(u^{k})\Vert^{2})\nonumber\\
&  =\frac{1}{\alpha}(\Vert u^{k}-R(u^{k})\Vert-\Vert T(u^{k})-R(u^{k}%
)\Vert)(\Vert u^{k}-R(u^{k})\Vert+\Vert T(u^{k})-R(u^{k})\Vert)\nonumber\\
&  \leq\frac{2d}{\alpha}(\Vert u^{k}-R(u^{k})\Vert-\Vert T(u^{k}%
)-R(u^{k})\Vert)\rightarrow0\text{.}%
\end{align}
Consequently,
\begin{equation}
\lim_{k\rightarrow\infty}\Vert T(u^{k})-u^{k}\Vert=0\text{.}%
\end{equation}
Since $\{u^{k}\}_{k=0}^{\infty}$ is bounded, there exists a subsequence
$\{u^{k_{j}}\}_{j=0}^{\infty}$ of $\{u^{k}\}_{k=0}^{\infty}$ such that
\begin{equation}
\lim_{j\rightarrow\infty}u^{k_{j}}=u^{\ast}\text{.}%
\end{equation}
The closedness of $I-T$ at $0$ yields that $u^{\ast}\in\operatorname*{Fix}(T)$
and
\begin{equation}
r\leq\inf_{u\in(\operatorname*{Fix}(T))^{r}\cap C}\operatorname*{dist}%
(u,\operatorname*{Fix}\left(  T\right)  )\leq\lim_{j\rightarrow\infty}\Vert
u^{k_{j}}-u^{\ast}\Vert=0\text{,}%
\end{equation}
i.e., $r=0$, which proves that $T$ is quasi-shrinking, as asserted.
\end{proof}

\begin{remark}%
%TCIMACRO{\TeXButton{rm}{\rm\ }}%
%BeginExpansion
\rm\
%EndExpansion
The converse to Proposition \ref{prop:1} is not true. To see this, take
$C=\{u\in%
%TCIMACRO{\U{211d} }%
%BeginExpansion
\mathbb{R}
%EndExpansion
^{n}\mid\langle a,u\rangle\leq\beta\}$ for some $a\neq0$, $\beta\in%
%TCIMACRO{\U{211d} }%
%BeginExpansion
\mathbb{R}
%EndExpansion
$ and $T=2P_{C}-I$. Then $T$ is quasi-shrinking, but $T$ is not SQNE.
\end{remark}

The next lemma is taken from \cite[Lemma 1]{Y2004a}.

\begin{lemma}
\label{Lemma:Yamada} Let $f:[0,\infty)\rightarrow\lbrack0,\infty]$ be an
increasing function such that $f(r)=0\Leftrightarrow r=0$. Let $\{b_{k}%
\}_{k=0}^{\infty}\subseteq\lbrack0,\infty)$ be such that%
\begin{equation}
\underset{k\rightarrow\infty}{\lim}b_{k}=0\text{.}%
\end{equation}
Then any sequence $\{a_{k}\}_{k=0}^{\infty}\subseteq\lbrack0,\infty)$
satisfying%
\begin{equation}
a_{k+1}\leq a_{k}-f(a_{k})+b_{k}%
\end{equation}
converges to $0$.
\end{lemma}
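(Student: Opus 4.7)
The plan is to establish convergence to $0$ in two stages. First, I would show that $\liminf_{k\to\infty} a_k = 0$, so that some subsequence of $\{a_k\}$ converges to zero. Second, I would use this together with a ``trapping'' argument based on the monotonicity of $f$ to rule out excursions back above an arbitrary threshold $\epsilon > 0$, yielding $\lim_{k\to\infty} a_k = 0$.

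For the first stage, I would argue by contradiction. Suppose $\liminf a_k > 0$: then there exist $\epsilon > 0$ and $K_0$ with $a_k \geq \epsilon$ for all $k \geq K_0$. By the monotonicity of $f$, $f(a_k) \geq f(\epsilon) > 0$, and necessarily $f(\epsilon) < \infty$, since otherwise the recursion would force $a_{K_0+1} < 0$. Because $b_k \to 0$, I may enlarge $K_0$ so that $b_k \leq f(\epsilon)/2$ for all $k \geq K_0$. The recursion then yields $a_{k+1} \leq a_k - f(\epsilon)/2$ for $k \geq K_0$, and telescoping gives
\[
0 \leq a_{N+1} \leq a_{K_0} - (N - K_0 + 1)\tfrac{f(\epsilon)}{2},
\]
whose right-hand side tends to $-\infty$ as $N \to \infty$, the desired contradiction.

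For the second stage, fix $\epsilon > 0$ arbitrarily. Using $f(\epsilon/2) > 0$ and $b_k \to 0$, choose $K_1$ so that $b_k \leq \min\{\epsilon/2,\, f(\epsilon/2)\}$ for every $k \geq K_1$, and then, by the first stage, pick $K_2 \geq K_1$ with $a_{K_2} < \epsilon/2$. I would then prove by induction on $k \geq K_2$ that $a_k < \epsilon$, splitting into two sub-cases. If $a_k < \epsilon/2$, then $a_{k+1} \leq a_k + b_k < \epsilon/2 + \epsilon/2 = \epsilon$. If instead $\epsilon/2 \leq a_k < \epsilon$, then monotonicity gives $f(a_k) \geq f(\epsilon/2)$, so $a_{k+1} \leq a_k - f(\epsilon/2) + b_k \leq a_k < \epsilon$. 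Hence $\limsup a_k \leq \epsilon$, and since $\epsilon$ was arbitrary, $a_k \to 0$.

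The main obstacle is the bookkeeping in the second stage: one must simultaneously control the possible ``drift up'' when $a_k$ is small (which forces $b_k$ to be small compared with $\epsilon$) and the ``overshoot'' when $a_k$ is close to the threshold (which forces $b_k$ to be dominated by $f(\epsilon/2)$, so that the contraction from $-f(a_k)$ absorbs the perturbation). Both requirements are met by the single choice of $K_1$ above, and the key structural fact that makes the induction close is the monotonicity of $f$, which converts the pointwise hypothesis $f(r) = 0 \Leftrightarrow r = 0$ into the uniform lower bound $f(a_k) \geq f(\epsilon/2)$ on the ``bad'' region $[\epsilon/2, \epsilon)$.
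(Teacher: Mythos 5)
Your proof is correct. Note that the paper itself gives no argument for this lemma; it simply cites Yamada and Ogura \cite[Lemma 1]{Y2004a}, so your write-up supplies a self-contained proof of what the paper leaves to the reference. The two-stage structure is sound: the contradiction argument gives $\liminf_{k}a_{k}=0$ (and your observation that $f(\epsilon)<\infty$ is forced, since otherwise $a_{K_{0}+1}$ would be negative, is exactly the right way to handle the extended-real-valued $f$), and the induction in the second stage closes because the threshold $\epsilon/2$ separates the ``drift-up'' region, where only $b_{k}\leq\epsilon/2$ is needed, from the region $[\epsilon/2,\epsilon)$, where $f(a_{k})\geq f(\epsilon/2)\geq b_{k}$ prevents any increase. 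The only point you leave implicit in the second stage is the possibility $f(\epsilon/2)=+\infty$; this causes no harm, since then $\min\{\epsilon/2,f(\epsilon/2)\}=\epsilon/2$ and the sub-case $a_{k}\geq\epsilon/2$ is vacuous (the recursion together with $a_{k+1}\geq0$ forces $f(a_{k})<\infty$ for every $k$), but a sentence to that effect would make the argument airtight.
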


Let $C\subseteq%
%TCIMACRO{\U{211d} }%
%BeginExpansion
\mathbb{R}
%EndExpansion
^{n}$ be nonempty, closed and convex. It follows from the characterization of
the metric projection that $P_{C}$ is a cutter. Moreover, $P_{C}$ satisfies
the closedness principle as a nonexpansive operator. We now present more
examples of cutter operators the complements of which are closed at $0$.

\begin{example}
\label{Ex:cut-cl}%
%TCIMACRO{\TeXButton{rm}{\rm\ }}%
%BeginExpansion
\rm\
%EndExpansion
Let $U:%
%TCIMACRO{\U{211d} }%
%BeginExpansion
\mathbb{R}
%EndExpansion
^{n}\rightarrow%
%TCIMACRO{\U{211d} }%
%BeginExpansion
\mathbb{R}
%EndExpansion
^{n}$ be an operator with a fixed point such that $I-U$ is closed at $0$
(\textit{e.g.}, the metric projection onto a closed and convex set), and let
$T:%
%TCIMACRO{\U{211d} }%
%BeginExpansion
\mathbb{R}
%EndExpansion
^{n}\rightarrow%
%TCIMACRO{\U{211d} }%
%BeginExpansion
\mathbb{R}
%EndExpansion
^{n}$ be a cutter such that $\operatorname*{Fix}\left(  U\right)
\subseteq\operatorname*{Fix}\left(  T\right)  $, which satisfies for any
bounded sequence $\{x^{k}\}_{k=0}^{\infty}\subseteq%
%TCIMACRO{\U{211d} }%
%BeginExpansion
\mathbb{R}
%EndExpansion
^{n}$ the following implication:%
\begin{equation}
\lim_{k\rightarrow\infty}\left\Vert T\left(  x^{k}\right)  -x^{k}\right\Vert
=0\Longrightarrow\lim_{k\rightarrow\infty}\left\Vert U\left(  x^{k}\right)
-x^{k}\right\Vert =0 \label{eq:Txk}%
\end{equation}
(\textit{e.g.}, a cutter $T$ such that $\left\Vert T\left(  x\right)
-x\right\Vert \geq\alpha\left\Vert U\left(  x\right)  -x\right\Vert $ for some
constant $\alpha>0$ and for all $x\in%
%TCIMACRO{\U{211d} }%
%BeginExpansion
\mathbb{R}
%EndExpansion
^{n}$). Then $T$ is closed at $0$.\newline Indeed, let $\lim_{k\rightarrow
\infty}x^{k}=z$ and $\lim_{k\rightarrow\infty}\left\Vert T\left(
x^{k}\right)  -x^{k}\right\Vert =0$. Then $\left\{  x^{k}\right\}
_{k=0}^{\infty}$ is bounded and, by (\ref{eq:Txk}), we have $\lim
_{k\rightarrow\infty}\left\Vert U\left(  x^{k}\right)  -x^{k}\right\Vert =0$.
Since $I-U$ is closed at $0$, we know that $z\in\operatorname*{Fix}\left(
U\right)  $. Consequently, $z\in\operatorname*{Fix}\left(  T\right)  $, i.e.,
$I-T$ is closed at $0$. An operator $T$ having the properties described above
is a special case of the sequence of operators considered in \cite[Theorem
1]{Ceg07} and in \cite[Theorem 9.9]{cc11}.
\end{example}

Next we present two more examples of operators which are cutters and the
complements of which are closed at $0$. These examples are special cases of
Example \ref{Ex:cut-cl}.

First we present several definitions.\bigskip

\begin{definition}%
%TCIMACRO{\TeXButton{rm}{\rm\ }}%
%BeginExpansion
\rm\
%EndExpansion
\label{def:sublevel set}Let $f:%
%TCIMACRO{\U{211d} }%
%BeginExpansion
\mathbb{R}
%EndExpansion
^{n}\rightarrow%
%TCIMACRO{\U{211d} }%
%BeginExpansion
\mathbb{R}
%EndExpansion
$ be a convex function.

(i) The set
\begin{equation}
\partial f(y):=\{g\in\mathbb{R}^{n}\colon f(x)\geq f(y)+\langle g,x-y\rangle
\text{ for all }x\in\mathbb{R}^{n}\}
\end{equation}
is called the \texttt{subdifferential} of $f$ at $y$ and any element of
$\partial f(y)$ is called a \texttt{subgradient} and denoted by $g_{f}(y)$.

(ii) We denote by $f_{\leq0}$ the \texttt{sublevel set} of $f$, that is,
\begin{equation}
f_{\leq0}:=\left\{  x\in%
%TCIMACRO{\U{211d} }%
%BeginExpansion
\mathbb{R}
%EndExpansion
^{n}\mid f(x)\leq0\right\}  . \label{eq:sublevel}%
\end{equation}

\end{definition}

\begin{definition}
Let $C\subseteq%
%TCIMACRO{\U{211d} }%
%BeginExpansion
\mathbb{R}
%EndExpansion
^{n}.$ The \texttt{indicator function} of $C$ at $x$ is defined%
\begin{equation}
I_{C}(x):=\left\{
\begin{array}
[c]{ll}%
0 & \text{if\ }x\in C,\\
\infty & \text{otherwise.}%
\end{array}
\right.
\end{equation}

\end{definition}

\begin{definition}
Let $C\subseteq%
%TCIMACRO{\U{211d} }%
%BeginExpansion
\mathbb{R}
%EndExpansion
^{n}$ be nonempty, closed and convex. We denote by $N_{C}\left(  v\right)  $
the \texttt{normal cone} of $C$ at $v\in C$, i.e.,%
\begin{equation}
N_{C}\left(  v\right)  :=\{d\in%
%TCIMACRO{\U{211d} }%
%BeginExpansion
\mathbb{R}
%EndExpansion
^{n}\mid\left\langle d,y-v\right\rangle \leq0 \text{ for all }y\in C\}.
\label{eq:normal_cone}%
\end{equation}

\end{definition}

\begin{remark}%
%TCIMACRO{\TeXButton{rm}{\rm\ }}%
%BeginExpansion
\rm\
%EndExpansion
It is well known that $\partial\left(  I_{C}\right)  =N_{C}$.
\end{remark}

\begin{example}%
%TCIMACRO{\TeXButton{rm}{\rm\ }}%
%BeginExpansion
\rm\
%EndExpansion
\label{Exmpl. subgradients}Let $f:%
%TCIMACRO{\U{211d} }%
%BeginExpansion
\mathbb{R}
%EndExpansion
^{n}\rightarrow%
%TCIMACRO{\U{211d} }%
%BeginExpansion
\mathbb{R}
%EndExpansion
$ be a convex function with a nonempty sublevel set $f_{\leq0}$ (see
(\ref{eq:sublevel})). Define an operator $\Pi_{f_{\leq0}}:%
%TCIMACRO{\U{211d} }%
%BeginExpansion
\mathbb{R}
%EndExpansion
^{n}\rightarrow%
%TCIMACRO{\U{211d} }%
%BeginExpansion
\mathbb{R}
%EndExpansion
^{n}$ by%
\begin{equation}
\Pi_{f_{\leq0}}(y):=\left\{
\begin{array}
[c]{ll}%
y-\frac{\displaystyle f(y)}{\displaystyle\left\Vert g_{f}(y)\right\Vert ^{2}%
}g_{f}(y) & \text{if\ }f(y)>0\text{,}\\
y & \text{if\ }f(y)\leq0\text{,}%
\end{array}
\right.
\end{equation}
where $g_{f}(y)$ is a subgradient of $f$ at $y$. The operator $\Pi_{f_{\leq0}%
}$ is called the \texttt{subgradient projection}\textit{ }relative to $f$.
\end{example}

For a subgradient $g_{f}(y)$, we denote%
\begin{equation}
L=L_{f}(y,g_{f}(y)):=\{x\in%
%TCIMACRO{\U{211d} }%
%BeginExpansion
\mathbb{R}
%EndExpansion
^{n}\mid f(y)+\left\langle g_{f}(y),x-y\right\rangle \leq0\}\text{.}
\label{Halfspase}%
\end{equation}

\begin{lemma}
\label{Lemma:subgradient projection}Let $f:%
%TCIMACRO{\U{211d} }%
%BeginExpansion
\mathbb{R}
%EndExpansion
^{n}\rightarrow%
%TCIMACRO{\U{211d} }%
%BeginExpansion
\mathbb{R}
%EndExpansion
$ be a convex function and let $y\in%
%TCIMACRO{\U{211d} }%
%BeginExpansion
\mathbb{R}
%EndExpansion
^{n}$. Assume that $f_{\leq0}\neq\emptyset$. Then the following assertions hold:

\begin{itemize}
\item[$\mathrm{(i)}$] $f_{\leq0}\subseteq L$. If $g_{f}(y)\neq0,$ then $L$ is
a half-space, otherwise $L=%
%TCIMACRO{\U{211d} }%
%BeginExpansion
\mathbb{R}
%EndExpansion
^{n}$.

\item[$\mathrm{(ii)}$] $\Pi_{f_{\leq0}}(y)=P_{L}(y)$ consequently,
$\Pi_{f_{\leq0}}$ is a cutter and $\operatorname*{Fix}\left(  \Pi_{f_{\leq0}%
}\right)  =f_{\leq0}$.

\item[$\mathrm{(iii)}$] $I-\Pi_{f_{\leq0}}$ is closed at $0$.
\end{itemize}
\end{lemma}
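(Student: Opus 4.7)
For part $\mathrm{(i)}$, the plan is to apply the subgradient inequality directly. For any $x\in f_{\leq0}$, the defining property of $g_f(y)\in\partial f(y)$ together with $f(x)\leq0$ gives $0\geq f(x)\geq f(y)+\langle g_f(y),x-y\rangle$, which is exactly the inequality defining $L$; hence $f_{\leq0}\subseteq L$. The geometric structure of $L$ splits into two cases: when $g_f(y)\neq0$, the set $L$ is visibly a closed half-space with outward normal $g_f(y)$. When $g_f(y)=0$, the subgradient inequality says $y$ is a global minimizer of $f$, so $f(y)\leq f(x)\leq0$ for any $x\in f_{\leq0}\neq\emptyset$; the inequality defining $L$ then reduces to $f(y)\leq0$, which holds for every $x$, so $L=\mathbb{R}^{n}$.

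For part $\mathrm{(ii)}$, I would rewrite $L=\{x\in\mathbb{R}^{n}\mid\langle g_f(y),x\rangle\leq\langle g_f(y),y\rangle-f(y)\}$ and invoke the standard formula for the metric projection onto a half-space, which yields $P_{L}(y)=y-\tfrac{\max(f(y),0)}{\|g_f(y)\|^{2}}g_f(y)$ when $g_f(y)\neq0$, and $P_{L}=I$ when $L=\mathbb{R}^{n}$. A case-by-case comparison with the definition of $\Pi_{f_{\leq0}}$ gives $\Pi_{f_{\leq0}}(y)=P_{L}(y)$, using the observation from $\mathrm{(i)}$ that $g_f(y)=0$ forces $f(y)\leq0$, so the branch $f(y)>0$ never encounters a vanishing subgradient. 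Since metric projections onto closed convex sets are cutters and $f_{\leq0}\subseteq L$ by $\mathrm{(i)}$, the cutter inequality for $P_{L}$ restricted to $w\in f_{\leq0}$ transfers verbatim to $\Pi_{f_{\leq0}}$. The identification $\operatorname{Fix}(\Pi_{f_{\leq0}})=f_{\leq0}$ is then immediate: if $y\in f_{\leq0}$, the definition makes $\Pi_{f_{\leq0}}(y)=y$; conversely, $y\notin f_{\leq0}$ means $f(y)>0$, in which case $g_f(y)\neq0$ and the correction term is strictly nonzero.

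For part $\mathrm{(iii)}$, the key quantitative identity derived in $\mathrm{(ii)}$ is
\begin{equation}
\max(f(y),0)\leq\|g_f(y)\|\cdot\|\Pi_{f_{\leq0}}(y)-y\|,
\end{equation}
which holds trivially when $f(y)\leq0$ and with equality (after dividing) when $f(y)>0$. Given a sequence $x^{k}\to x$ with $\|\Pi_{f_{\leq0}}(x^{k})-x^{k}\|\to0$, I would invoke the local boundedness of the subdifferential of a finite-valued convex function on $\mathbb{R}^{n}$ (a standard consequence of the fact that such an $f$ is locally Lipschitz) to conclude that $\{g_f(x^{k})\}$ is bounded on the convergent and hence bounded sequence $\{x^{k}\}$. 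Substituting into the displayed inequality yields $\max(f(x^{k}),0)\to0$, so $\limsup_{k}f(x^{k})\leq0$. Continuity of $f$ then gives $f(x)\leq0$, i.e., $x\in f_{\leq0}=\operatorname{Fix}(\Pi_{f_{\leq0}})$, proving that $I-\Pi_{f_{\leq0}}$ is closed at $0$.

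The main obstacle I anticipate is a clean treatment of the degenerate case $g_f(y)=0$, since the formula defining $\Pi_{f_{\leq0}}$ would otherwise appear ill-posed; the remedy is the observation made in $\mathrm{(i)}$ that this case only arises when $f(y)\leq0$. The only non-elementary ingredient is the local boundedness of $\partial f$ on $\mathbb{R}^{n}$ used in $\mathrm{(iii)}$, but this is a classical fact about finite-valued convex functions and does not require further argument.
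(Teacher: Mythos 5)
Your proof is correct. Note that the paper itself does not prove this lemma; it simply cites the literature (Bauschke--Borwein, Censor--Segal, and Cegielski's monograph), and your argument is essentially the standard one found there: the subgradient inequality gives $f_{\leq0}\subseteq L$, the explicit half-space projection formula identifies $\Pi_{f_{\leq0}}(y)$ with $P_{L}(y)$ (with the degenerate case $g_{f}(y)=0$ correctly dispatched via the global-minimizer observation, which uses $f_{\leq0}\neq\emptyset$), the cutter property is inherited from the projection characterization since $\operatorname{Fix}(\Pi_{f_{\leq0}})=f_{\leq0}\subseteq L$, and closedness of $I-\Pi_{f_{\leq0}}$ at $0$ follows from the bound $\max(f(x^{k}),0)\leq\Vert g_{f}(x^{k})\Vert\cdot\Vert\Pi_{f_{\leq0}}(x^{k})-x^{k}\Vert$ together with local boundedness of $\partial f$ and continuity of $f$. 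The only thing you gain over the paper is self-containedness; there is no gap to report.
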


\begin{proof}
See \cite[Lemma 7.3]{BB}, \cite[Lemma 2.4]{cs09} and \cite[Lemma 4.2.5 and
Corollary 4.2.6]{Ceg12}.
\end{proof}

\bigskip

The next class of operators was introduced by Aharoni et. al. in \cite{abc83}
for solving the convex feasibility problem. Later Gibali \cite{Gibali} and
Censor and Gibali \cite{CG} used them for solving variational inequalities.

Let $C\subseteq%
%TCIMACRO{\U{211d} }%
%BeginExpansion
\mathbb{R}
%EndExpansion
^{n}$ be nonempty, closed and convex. Assume that $C$ can be represented as a
sublevel set of some convex function $c:%
%TCIMACRO{\U{211d} }%
%BeginExpansion
\mathbb{R}
%EndExpansion
^{n}\rightarrow%
%TCIMACRO{\U{211d} }%
%BeginExpansion
\mathbb{R}
%EndExpansion
$, that is,%
\begin{equation}
C=\left\{  x\in%
%TCIMACRO{\U{211d} }%
%BeginExpansion
\mathbb{R}
%EndExpansion
^{n}\mid c(x)\leq0\right\}  \text{.} \label{Eset}%
\end{equation}
Given a point $z\in%
%TCIMACRO{\U{211d} }%
%BeginExpansion
\mathbb{R}
%EndExpansion
^{n}$ and a positive real number $\delta$, we define for $z\notin C$ the ball%
\begin{equation}
B(z,\delta c(z)):=\left\{  x\in%
%TCIMACRO{\U{211d} }%
%BeginExpansion
\mathbb{R}
%EndExpansion
^{n}\mid\left\Vert x-z\right\Vert \leq\delta c(z)\right\}  \text{.}%
\end{equation}
For $x,y\in%
%TCIMACRO{\U{211d} }%
%BeginExpansion
\mathbb{R}
%EndExpansion
^{n}$ we consider the set $H(x,y)$ as in (\ref{eq:halfspaceH}) and define%
\begin{equation}
A_{\delta}(z):=\left\{  y\in%
%TCIMACRO{\U{211d} }%
%BeginExpansion
\mathbb{R}
%EndExpansion
^{n}\mid C\subseteq H(z,y)\text{ and }\operatorname*{int}B(z,\delta c(z))\cap
H(z,y)=\emptyset\right\}  \text{.} \label{A-delta set}%
\end{equation}
We also need to impose the following condition.

\begin{condition}%
%TCIMACRO{\TeXButton{rm}{\rm\ }}%
%BeginExpansion
\rm\
%EndExpansion
\label{condition delta}Given a set $C\subseteq%
%TCIMACRO{\U{211d} }%
%BeginExpansion
\mathbb{R}
%EndExpansion
^{n}$, described as in (\ref{Eset}), we assume that for every $z\notin C,$%
\begin{equation}
B(z,\delta c(z))\cap C=\emptyset\text{.}%
\end{equation}

\end{condition}

Every convex set $C$ can be described by (\ref{Eset}). We may take, for
example, $c(z)=\operatorname*{dist}(z,C)$. In this case Condition
\ref{condition delta} always holds for $\delta\in(0,1).$

\begin{example}
\label{def. E-delta op}%
%TCIMACRO{\TeXButton{rm}{\rm\ }}%
%BeginExpansion
\rm\
%EndExpansion
Given a nonempty, closed and convex subset $C\subseteq%
%TCIMACRO{\U{211d} }%
%BeginExpansion
\mathbb{R}
%EndExpansion
^{n}$, with the representation (\ref{Eset}), and a real number $\delta
\in(0,1]$ such that Condition \ref{condition delta} holds, we define the
operator $T_{C,\delta}$ at any $z\in%
%TCIMACRO{\U{211d} }%
%BeginExpansion
\mathbb{R}
%EndExpansion
^{n}$ by%
\begin{equation}
T_{C,\delta}(z):=\left\{
\begin{array}
[c]{ll}%
P_{H(z,y)}(z) & \text{ if }z\notin C\text{,}\\
z & \text{if }z\in C\text{,}%
\end{array}
\right.  \label{e-TCd}%
\end{equation}
where $H(z,y)$ is built from any selection of $y$ from $A_{\delta}(z)$, and
call it a $C$\texttt{-}$\delta$\texttt{ operator}\textit{.}
\end{example}

Observe that the subgradient projector $\Pi_{f_{\leq0}}$ is a $T_{C,\delta}$
operator; see \cite[Lemma 2.8]{cs09}. The fact that any $C$-$\delta$ operator
is a cutter operator follows from its definition. For the closedness of
$T_{C,\delta}-I$ at $0,$ see, \textit{e.g.}, \cite[Lemma 2.7]{cs09}.
Alternatively, we show that, for any bounded sequence $\{x^{k}\}_{k=0}%
^{\infty},$ implication (\ref{eq:Txk}) is satisfied with $T:=T_{C,\delta}$.
Let $U:=\Pi_{c_{\leq0}},$ where $c:%
%TCIMACRO{\U{211d} }%
%BeginExpansion
\mathbb{R}
%EndExpansion
^{n}\rightarrow%
%TCIMACRO{\U{211d} }%
%BeginExpansion
\mathbb{R}
%EndExpansion
$ is a convex function with $C:=\{x\in%
%TCIMACRO{\U{211d} }%
%BeginExpansion
\mathbb{R}
%EndExpansion
^{n}\mid c(x)\leq0\}\neq\emptyset$. Then, following Lemma
\ref{Lemma:subgradient projection}, $I-U$ is closed at $0$ and
$\operatorname*{Fix}\left(  U\right)  =C$. Now, let $\{x^{k}\}_{k=0}^{\infty}$
be bounded and let $\lim_{k\rightarrow\infty}\left\Vert T_{C,\delta}\left(
x^{k}\right)  -x^{k}\right\Vert =0$. Then, from the inequality $\left\Vert
\delta c(x^{k})\right\Vert \leq\left\Vert T_{C,\delta}\left(  x^{k}\right)
-x^{k}\right\Vert $, it follows that $\lim_{k\rightarrow\infty}c(x^{k})=0$.
Consequently, $\lim_{k\rightarrow\infty}\operatorname*{dist}(x^{k},C)=0$, by
the continuity of $c$. We claim that $\lim_{k\rightarrow\infty}\left\Vert
U\left(  x^{k}\right)  -x^{k}\right\Vert =0$. Indeed, by the definition of the
subgradient $g_{c}$ and the Cauchy--Schwarz inequality, we have%
\begin{align}
0  &  \leq c(x^{k})=c(x^{k})-c(P_{C}\left(  x^{k}\right)  )\leq\langle
g_{c}(x^{k}),x^{k}-P_{C}(x^{k})\rangle\nonumber\\
&  \leq\left\Vert g_{c}(x^{k})\right\Vert \left\Vert x^{k}-P_{C}%
(x^{k})\right\Vert =\left\Vert g_{c}(x^{k})\right\Vert \operatorname*{dist}%
(x^{k},C)
\end{align}
and%
\begin{equation}
\left\Vert U\left(  x^{k}\right)  -x^{k}\right\Vert =\frac{\left\vert
c(x^{k})\right\vert }{\left\Vert g_{c}(x^{k})\right\Vert }\leq
\operatorname*{dist}(x^{k},C)\text{.}%
\end{equation}
Consequently, $\lim_{k\rightarrow\infty}\left\Vert U\left(  x^{k}\right)
-x^{k}\right\Vert =0$, as claimed.

\section{The algorithm\label{Sec:alg}}

Let $\mathcal{F}:%
%TCIMACRO{\U{211d} }%
%BeginExpansion
\mathbb{R}
%EndExpansion
^{n}\rightarrow%
%TCIMACRO{\U{211d} }%
%BeginExpansion
\mathbb{R}
%EndExpansion
^{n}$ and let $T:%
%TCIMACRO{\U{211d} }%
%BeginExpansion
\mathbb{R}
%EndExpansion
^{n}\rightarrow%
%TCIMACRO{\U{211d} }%
%BeginExpansion
\mathbb{R}
%EndExpansion
^{n}$ be a cutter. We need to assume that the following conditions hold in
order to prove the convergence of our algorithm. These conditions were assumed
to hold in \cite{F86} for solving VIP$(\mathcal{F},S)$ (see
(\ref{eq:Classicvip})). Furthermore, the first two of these conditions
guarantee that VIP$(\mathcal{F},\operatorname*{Fix}\left(  T\right)  )$ has a
unique solution (see \cite[Theorem 2.3.3]{Facc} or \cite[Chap. I, Corollary
4.3]{KindStamp}).

\begin{condition}
\label{con:a}%
%TCIMACRO{\TeXButton{rm}{\rm\ } }%
%BeginExpansion
\rm\
%EndExpansion
$\mathcal{F}$ is continuous on $\left(  \operatorname*{Fix}(T)\right)
_{\varepsilon}$ for some $\varepsilon>0$.
\end{condition}

\begin{condition}
\label{con:b}%
%TCIMACRO{\TeXButton{rm}{\rm\ } }%
%BeginExpansion
\rm\
%EndExpansion
$\mathcal{F}$ is $\alpha$\textit{-strongly monotone} on $\left(
\operatorname*{Fix}(T)\right)  _{\varepsilon}$ for some $\varepsilon>0$ and
$\alpha>0$, i.e.,
\begin{equation}
\left\langle \mathcal{F}(x)-\mathcal{F}(y),x-y\right\rangle \geq
\alpha\left\Vert x-y\right\Vert ^{2}\text{ for all }x,y\in\left(
\operatorname*{Fix}(T)\right)  _{\varepsilon}\text{.} \label{eq:sm}%
\end{equation}

\end{condition}

\begin{condition}
\label{con:c}%
%TCIMACRO{\TeXButton{rm}{\rm\ } }%
%BeginExpansion
\rm\
%EndExpansion
For some $q\in\operatorname*{Fix}(T)$, there exist some $\beta>0$ and a
bounded subset $E\subseteq%
%TCIMACRO{\U{211d} }%
%BeginExpansion
\mathbb{R}
%EndExpansion
^{n}$ such that%
\begin{equation}
\langle\mathcal{F}(x),x-q\rangle\geq\beta\Vert\mathcal{F}(x)\Vert\text{ for
all }x\notin E\text{.} \label{eq:c}%
\end{equation}

\end{condition}

\begin{condition}
\label{con:d}%
%TCIMACRO{\TeXButton{rm}{\rm\ } }%
%BeginExpansion
\rm\
%EndExpansion
$I-T$ is closed at $0$.\bigskip
\end{condition}

\begin{remark}
\label{remark:suffCond}%
%TCIMACRO{\TeXButton{rm}{\rm\ }}%
%BeginExpansion
\rm\
%EndExpansion
Conditions of the type of Condition \ref{con:c} are commonly used in
Optimization Theory (see, e.g., \cite[ Section 2.5 ]{Fletcher} and
\cite[Section 8.3]{GK99}, where also examples of methods employing these
conditions are presented). As it was observed by Fukushima in \cite{F86}, a
sufficient condition for Condition \ref{con:c} to hold is that the vectors
$\mathcal{F}(x)$ and $x$ make an acute angle, which is uniformly bounded away
from $\pi/2$, as $\Vert x\Vert\rightarrow\infty$. Indeed, Let $c\in(0,1)$ and
$r>0$ be such that
\begin{equation}
\langle\frac{\mathcal{F}(x)}{\Vert\mathcal{F}(x)\Vert},\frac{x}{\Vert x\Vert
}\rangle\geq c \label{e-c}%
\end{equation}
for $\Vert x\Vert\geq r$. Let $\beta\in(0,c)$, $q\in\operatorname{Fix}T$ and
$R\geq r$ be such that $\frac{\Vert q\Vert}{R}+\beta\leq c$. Then, for all
$\Vert x\Vert\geq R$ we obtain
\begin{align}
\langle\mathcal{F}(x),x-q\rangle &  =\left(  \left\langle \frac{\mathcal{F}%
(x)}{\Vert\mathcal{F}(x)\Vert},\frac{x}{\Vert x\Vert}\right\rangle
-\left\langle \frac{\mathcal{F}(x)}{\Vert\mathcal{F}(x)\Vert},\frac{q}{\Vert
x\Vert}\right\rangle \right)  \Vert\mathcal{F}(x)\Vert\cdot\Vert
x\Vert\nonumber\\
&  \geq\left(  c-\frac{\Vert q\Vert}{R}\right)  \Vert\mathcal{F}(x)\Vert
\cdot\Vert x\Vert\geq\beta\Vert\mathcal{F}(x)\Vert\text{.}%
\end{align}
In addition, observe that Conditions \ref{con:a} and \ref{con:b} concern the
behavior of $\mathcal{F}$on $\left(  \operatorname*{Fix}(T)\right)
_{\varepsilon}$, while Condition \ref{con:c} deals with a rather global behavior.
\end{remark}

\begin{example}%
%TCIMACRO{\TeXButton{rm}{\rm\ }}%
%BeginExpansion
\rm\
%EndExpansion
Let $%
%TCIMACRO{\U{211d} }%
%BeginExpansion
\mathbb{R}
%EndExpansion
^{n}$ be equipped with the standard inner product $\langle x,y\rangle
:=x^{\intercal}y$, $x,y\in%
%TCIMACRO{\U{211d} }%
%BeginExpansion
\mathbb{R}
%EndExpansion
^{n}$, $\lambda_{1},\lambda_{2}\in%
%TCIMACRO{\U{211d} }%
%BeginExpansion
\mathbb{R}
%EndExpansion
$ be such that $0<\lambda_{1}\leq\lambda_{2}$. Let $c\in(0,\lambda_{1}%
/\lambda_{2})$, $a\in%
%TCIMACRO{\U{211d} }%
%BeginExpansion
\mathbb{R}
%EndExpansion
^{n}$ and $r:=\frac{\lambda_{2}(1+c)\Vert a\Vert}{\lambda_{1}-\lambda_{2}c}$.
Define $\mathcal{F}:%
%TCIMACRO{\U{211d} }%
%BeginExpansion
\mathbb{R}
%EndExpansion
^{n}\rightarrow%
%TCIMACRO{\U{211d} }%
%BeginExpansion
\mathbb{R}
%EndExpansion
^{n}$ by%
\[
\mathcal{F}(x)=\left\{
\begin{array}
[c]{cc}%
\text{arbitrary} & \text{if }\Vert x\Vert\leq r\\
G(x)(x-a) & \text{if }\Vert x\Vert>r\text{,}%
\end{array}
\right.
\]
where $G(x)$ is a positive definite matrix with $\inf_{\Vert x\Vert>r}%
\lambda_{\min}(G(x))\geq\lambda_{1}>0$ and $\sup_{\Vert x\Vert>r}\lambda
_{\max}(G(x))\leq\lambda_{2}$, where $\lambda_{\min}(G(x))$ and $\lambda
_{\max}(G(x))$ denote the smallest and the largest eigenvalue of $G(x)$,
respectively. We show, that $\mathcal{F}$ satisfies (\ref{e-c}) for all $x\in%
%TCIMACRO{\U{211d} }%
%BeginExpansion
\mathbb{R}
%EndExpansion
^{n}$ with $\Vert x\Vert>r$. It follows from the inequalities
\[
\Vert G(x)x\Vert\leq\Vert G(x)\Vert\cdot\Vert x\Vert=\lambda_{\max}(G(x))\Vert
x\Vert
\]
and%
\[
\lambda_{\min}(G(x))\Vert x\Vert^{2}\leq x^{\intercal}G(x)x\leq\lambda_{\max
}(G(x))\Vert x\Vert^{2}\text{,}%
\]
the Cauchy--Schwarz inequality, the triangle inequality and the monotonicity
of the function $\xi\rightarrow\frac{\lambda_{1}\xi-\lambda_{2}\Vert a\Vert
}{\lambda_{2}(\xi+\Vert a\Vert)}$ for $\xi>0$, that
\begin{align*}
\frac{\langle\mathcal{F}(x),x\rangle}{\Vert\mathcal{F}(x)\Vert\cdot\Vert
x\Vert}  &  =\frac{(x-a)^{\intercal}G(x)x}{\Vert G(x)(x-a)\Vert\cdot\Vert
x\Vert}=\frac{x^{\intercal}G(x)x-a^{\intercal}G(x)x}{\Vert G(x)(x-a)\Vert
\cdot\Vert x\Vert}\\
&  \geq\frac{\lambda_{\min}(G(x))\Vert x\Vert^{2}-\Vert a\Vert\lambda_{\max
}(G(x))\Vert x\Vert}{\lambda_{\max}(G(x))(\Vert x\Vert+\Vert a\Vert)\Vert
x\Vert}\\
&  \geq\frac{\lambda_{1}\Vert x\Vert-\lambda_{2}\Vert a\Vert}{\lambda
_{2}(\Vert x\Vert+\Vert a\Vert)}\geq c
\end{align*}
for all $x\in%
%TCIMACRO{\U{211d} }%
%BeginExpansion
\mathbb{R}
%EndExpansion
^{n}$ with $\Vert x\Vert>r$. If $G(x)=G$ for all $x\in%
%TCIMACRO{\U{211d} }%
%BeginExpansion
\mathbb{R}
%EndExpansion
^{n}$, then $\mathcal{F}(x)=G(x-a)$ and the unique solution of
VIP$(\mathcal{F},C)$ is $P_{C}^{G}a:=\operatorname{argmin}_{x\in C}\Vert
x-a\Vert_{G}$, where $\Vert\cdot\Vert_{G}$ denotes the norm induced by $G$,
i.e., $\Vert u\Vert_{G}=(u^{\intercal}Gu)^{\frac{1}{2}}$.
\end{example}

Let $\{\rho_{k}\}_{k=0}^{\infty}$ be a sequence of positive numbers satisfying%
\begin{equation}
\underset{k\rightarrow\infty}{\lim}\rho_{k}=0\text{ and }\sum_{k=1}^{\infty
}\rho_{k}=+\infty\text{.} \label{eq:ro}%
\end{equation}

\begin{algorithm}
\label{alg:ac}%
%TCIMACRO{\TeXButton{rm}{\rm\ }}%
%BeginExpansion
\rm\
%EndExpansion

\textbf{Initialization}: Choose an arbitrary initial point $x^{0}\in%
%TCIMACRO{\U{211d} }%
%BeginExpansion
\mathbb{R}
%EndExpansion
^{n}$ and set $k=0$.

\textbf{Iteration step}: Given the current iterate $x^{k}$,

\begin{enumerate}
\item[(1)] build the set $H_{k}:=H(x^{k},T(x^{k}))\ $and\textbf{ }calculate
the \textquotedblleft shifted point\textquotedblright%
\begin{equation}
z^{k}:=\left\{
\begin{array}
[c]{ll}%
x^{k}-\rho_{k}\mathcal{F}(x^{k})/\Vert\mathcal{F}(x^{k})\Vert & \text{if
}\mathcal{F}(x^{k})\neq0,\\
x^{k} & \text{if }\mathcal{F}(x^{k})=0\text{.}%
\end{array}
\right.  \label{eq:z}%
\end{equation}

\item[(2)] Choose $\alpha_{k}\in\lbrack\mu,2-\mu]$ for some $\mu\in(0,1)$ and
calculate the next iterate as follows:%
\begin{equation}
x^{k+1}=P_{\alpha_{k}}(z^{k})\text{,} \label{eq:iteration}%
\end{equation}
where $P_{\alpha_{k}}=I+\alpha_{k}(P_{H_{k}}-I)$ and $P_{H_{k}}$ is the metric
projection of $%
%TCIMACRO{\U{211d} }%
%BeginExpansion
\mathbb{R}
%EndExpansion
^{n}$ onto $H_{k}$.

\item[(3)] Set $k:=k+1$ and go to step (1).
\end{enumerate}
\end{algorithm}

\begin{remark}%
%TCIMACRO{\TeXButton{rm}{\rm\ }}%
%BeginExpansion
\rm\
%EndExpansion
Since $T$ is a cutter, we have $\operatorname*{Fix}\left(  T\right)  \subseteq
H_{k}$. Observe that (\ref{eq:iteration}) has an explicit form, because it is
a relaxed projection onto a half-space ($x^{k}\neq T\left(  x^{k}\right)  $):
\begin{equation}
x^{k+1}=P_{\alpha_{k}}(z^{k})=\left\{
\begin{array}
[c]{ll}%
z^{k}-\alpha_{k}\frac{\left\langle z^{k}-T\left(  x^{k}\right)  ,x^{k}%
-T\left(  x^{k}\right)  \right\rangle }{\Vert x^{k}-T\left(  x^{k}\right)
\Vert^{2}}\left(  x^{k}-T\left(  x^{k}\right)  \right)  & \text{if }%
z^{k}\notin H_{k}\text{,}\\
z^{k} & \text{if }z^{k}\in H_{k}\text{.}%
\end{array}
\right.
\end{equation}

\end{remark}

An illustration of the iterative step of Algorithm \ref{alg:ac} is given in
Figure \ref{fig:2}.%

%TCIMACRO{\FRAME{ftbpFU}{3.3347in}{2.5062in}{0pt}{\Qcb{Iterative step of
%Algorithm \ref{alg:ac}}}{\Qlb{fig:2}}{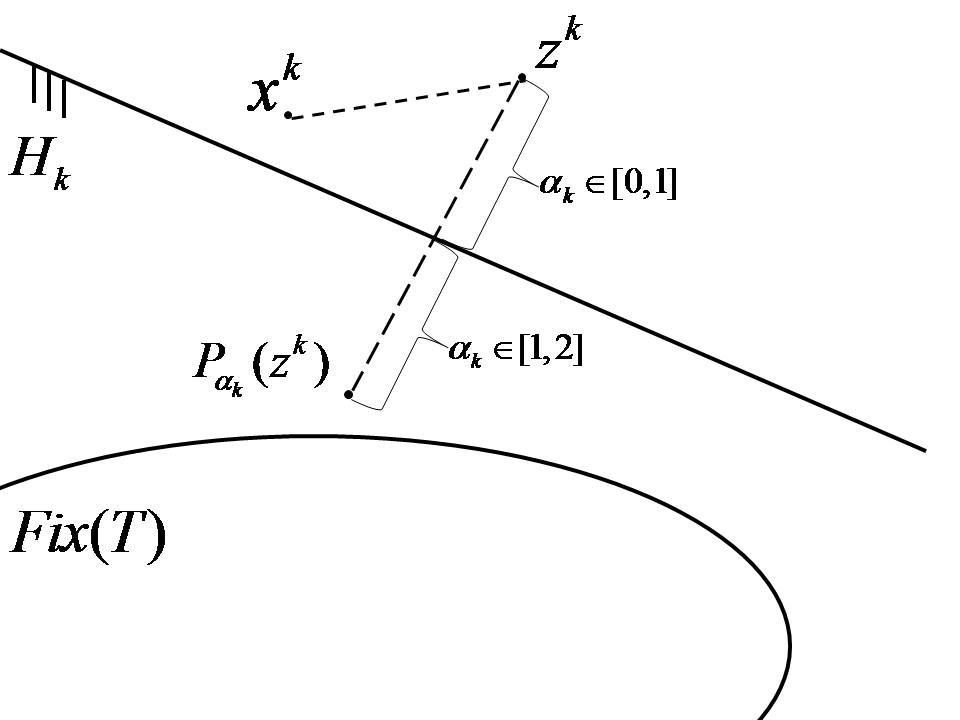}%
%{\special{ language "Scientific Word";  type "GRAPHIC";
%maintain-aspect-ratio TRUE;  display "USEDEF";  valid_file "F";
%width 3.3347in;  height 2.5062in;  depth 0pt;  original-width 9.9998in;
%original-height 7.4996in;  cropleft "0";  croptop "1";  cropright "1";
%cropbottom "0";  filename 'directed2.gif';file-properties "XNPEU";}} }%
%BeginExpansion
\begin{figure}
[ptb]
\begin{center}
\includegraphics[
natheight=7.499600in,
natwidth=9.999800in,
height=2.5062in,
width=3.3347in
]%
{}%
\caption{Iterative step of Algorithm \ref{alg:ac}}%
\label{fig:2}%
\end{center}
\end{figure}
%EndExpansion
It is clear that
\begin{equation}
\left\Vert z^{k}-x^{k}\right\Vert =\left\{
\begin{array}
[c]{ll}%
\rho_{k} & \text{ if }\mathcal{F}(x^{k})\neq0\text{,}\\
0 & \text{if }\mathcal{F}(x^{k})=0\text{.}%
\end{array}
\right.  \label{eq:inq15}%
\end{equation}
Therefore (\ref{eq:ro}) yields $\lim_{k\rightarrow\infty}\left\Vert
z^{k}-x^{k}\right\Vert =0$.

\section{Convergence\label{Sec:con}}

The following lemma is a consequence of Theorem \ref{Lem:2}, where
$P_{\alpha_{k}}$ is a relaxation of $P_{H_{k}}$, both defined in Algorithm
\ref{alg:ac}. Nevertheless, below we present a complete proof of this lemma
for the convenience of the reader.

\begin{lemma}
\label{Lem:3} Let $y\in%
%TCIMACRO{\U{211d} }%
%BeginExpansion
\mathbb{R}
%EndExpansion
^{n}$ be arbitrary and let $\alpha_{k}\in(0,2)$. Then in the setting of
Algorithm $\mathrm{\ref{alg:ac}}$ we have
\begin{equation}
\Vert P_{\alpha_{k}}(y)-w\Vert^{2}\leq\Vert y-w\Vert^{2}-\frac{2-\alpha_{k}%
}{\alpha_{k}}\Vert P_{\alpha_{k}}(y)-y\Vert^{2}\text{\ for all\ }%
w\in\operatorname*{Fix}(T)\text{.} \label{inq:1*}%
\end{equation}
Consequently,
\begin{equation}
\operatorname*{dist}(P_{\alpha_{k}}(y),\operatorname*{Fix}(T))^{2}%
\leq\operatorname*{dist}(y,\operatorname*{Fix}(T))^{2}-\frac{2-\alpha_{k}%
}{\alpha_{k}}\Vert P_{\alpha_{k}}(y)-y\Vert^{2}\text{.} \label{eq:2*}%
\end{equation}

\end{lemma}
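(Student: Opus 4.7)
The plan is to reduce the lemma to a single algebraic identity plus the standard variational characterization of the metric projection onto a convex set. The key preliminary observation is that $P_{H_k}$ is the metric projection onto the half-space $H_k = H(x^k, T(x^k))$, and since $T$ is a cutter we have $\operatorname{Fix}(T) \subseteq H_k$. Hence it is enough to prove the inequality (\ref{inq:1*}) for every $w \in H_k$, because $\operatorname{Fix}(T) \subseteq H_k = \operatorname{Fix}(P_{H_k})$.

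First I would fix $y \in \mathbb{R}^{n}$, set $p := P_{H_k}(y)$ and $p_{\alpha} := P_{\alpha_k}(y) = y + \alpha_k(p-y)$, and record the two facts I will need:
\begin{equation*}
p_{\alpha}-y = \alpha_k(p-y), \qquad \langle y-p,\, w-p\rangle \leq 0 \text{ for all } w \in H_k,
\end{equation*}
where the second inequality is the standard characterization of $p = P_{H_k}(y)$. I would then expand
\begin{equation*}
\|p_{\alpha}-w\|^2 = \|(y-w) + (p_{\alpha}-y)\|^2 = \|y-w\|^2 + 2\langle p_{\alpha}-y,\, y-w\rangle + \|p_{\alpha}-y\|^2,
\end{equation*}
and substitute $p_{\alpha}-y = \alpha_k(p-y)$ to rewrite the cross term as $2\alpha_k\langle p-y, y-w\rangle = -2\alpha_k\langle p-y, w-p\rangle - 2\alpha_k\|p-y\|^2$. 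The variational inequality gives $-2\alpha_k\langle p-y, w-p\rangle \leq 0$, so the cross term is bounded above by $-2\alpha_k\|p-y\|^2 = -(2/\alpha_k)\|p_{\alpha}-y\|^2$.

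Combining the bounds collapses the coefficient of $\|p_{\alpha}-y\|^2$ to $1 - 2/\alpha_k = -(2-\alpha_k)/\alpha_k$, which is exactly (\ref{inq:1*}). For the consequent inequality (\ref{eq:2*}), I would apply (\ref{inq:1*}) to the particular choice $w := R(y) = P_{\operatorname{Fix}(T)}(y) \in \operatorname{Fix}(T)$, use $\|y-R(y)\| = \operatorname{dist}(y,\operatorname{Fix}(T))$, and then dominate $\operatorname{dist}(P_{\alpha_k}(y),\operatorname{Fix}(T))^2 \leq \|P_{\alpha_k}(y)-R(y)\|^2$ by the definition of the distance function.

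There is no real obstacle here: the proof is a short direct computation. The only pitfall is the sign bookkeeping in passing from $\langle p-y, y-w\rangle$ to $\langle p-y, w-p\rangle$, and the need to notice at the outset that the cutter property $\operatorname{Fix}(T) \subseteq H_k$ is what licenses applying an identity about $P_{H_k}$ to points $w$ coming from the smaller set $\operatorname{Fix}(T)$.
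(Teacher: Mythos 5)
Your proof is correct and follows essentially the same route as the paper: expand $\Vert P_{\alpha_k}(y)-w\Vert^2$ around $y$, invoke the variational characterization of $P_{H_k}$ (valid for $w\in\operatorname{Fix}(T)$ because $T$ being a cutter gives $\operatorname{Fix}(T)\subseteq H_k$), and use $\Vert P_{\alpha_k}(y)-y\Vert=\alpha_k\Vert P_{H_k}(y)-y\Vert$ to collect the coefficient $-(2-\alpha_k)/\alpha_k$, then take $w=P_{\operatorname{Fix}(T)}(y)$ for the distance inequality. The only difference is cosmetic bookkeeping in how the cross term is rearranged, which does not change the argument.
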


\begin{proof}
Let $w\in\operatorname*{Fix}(T)$. Since $\operatorname*{Fix}(T)\subseteq
H_{k}$, the characterization of the metric projection yields%

\begin{equation}
\left\langle y-P_{H_{k}}(y),y-w\right\rangle \geq\left\Vert P_{H_{k}%
}(y)-y\right\Vert ^{2}=\frac{1}{\alpha_{k}^{2}}\left\Vert P_{\alpha_{k}%
}(y)-y\right\Vert ^{2}%
\end{equation}
and therefore we have%

\begin{align}
\left\Vert P_{\alpha_{k}}(y)-w\right\Vert ^{2}  &  =\left\Vert y+\alpha
_{k}(P_{H_{k}}(y)-y)-w\right\Vert ^{2}\nonumber\\
&  =\left\Vert y-w\right\Vert ^{2}+\alpha_{k}^{2}\left\Vert P_{H_{k}%
}(y)-y\right\Vert ^{2}-2\alpha_{k}\left\langle y-P_{H_{k}}(y),y-w\right\rangle
\nonumber\\
&  \leq\left\Vert y-w\right\Vert ^{2}-\frac{2-\alpha_{k}}{\alpha_{k}%
}\left\Vert P_{\alpha_{k}}(y)-y\right\Vert ^{2}\text{.}%
\end{align}
When we set $w=P_{\operatorname*{Fix}(T)}\left(  y\right)  $ in (\ref{inq:1*}%
), we obtain
\begin{equation}
\operatorname*{dist}(P_{\alpha_{k}}(y),\operatorname*{Fix}(T))^{2}%
\leq\operatorname*{dist}(y,\operatorname*{Fix}(T))^{2}-\frac{2-\alpha_{k}%
}{\alpha_{k}}\Vert P_{\alpha_{k}}(y)-y\Vert^{2}%
\end{equation}
and the proof is complete.
\end{proof}

\begin{lemma}
\label{Lem:5} Assume that Condition $\mathrm{\ref{con:c}}$ holds. Then any
sequence $\{x^{k}\}_{k=0}^{\infty}$ generated by Algorithm
$\mathrm{\ref{alg:ac}}$ is bounded.
\end{lemma}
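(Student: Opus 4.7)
The plan is to pick the $q\in\operatorname{Fix}(T)$ from Condition \ref{con:c} and monitor $a_k:=\Vert x^k-q\Vert$. Boundedness of $\{x^k\}$ is equivalent to boundedness of $\{a_k\}$, so it suffices to exhibit an upper bound for the tail $(a_k)_{k\geq K}$ for some index $K$.

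First I would use Lemma \ref{Lem:3} (applied to $y=z^k$) together with the fact that $q\in\operatorname{Fix}(T)\subseteq H_k$ to conclude the nonexpansive-type estimate
\begin{equation}
\Vert x^{k+1}-q\Vert\leq\Vert z^k-q\Vert.
\end{equation}
Next, when $\mathcal{F}(x^k)\neq 0$, I would expand the square
\begin{equation}
\Vert z^k-q\Vert^2=\Vert x^k-q\Vert^2-2\rho_k\Big\langle\tfrac{\mathcal{F}(x^k)}{\Vert\mathcal{F}(x^k)\Vert},x^k-q\Big\rangle+\rho_k^2,
\end{equation}
and invoke Condition \ref{con:c}, which gives $\langle\mathcal{F}(x^k),x^k-q\rangle\geq\beta\Vert\mathcal{F}(x^k)\Vert$ as soon as $x^k\notin E$. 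This yields the key dichotomy: for $x^k\notin E$,
\begin{equation}
a_{k+1}^2\leq a_k^2+\rho_k(\rho_k-2\beta),
\end{equation}
while the triangle inequality and $\Vert z^k-x^k\Vert\leq\rho_k$ always give $a_{k+1}\leq a_k+\rho_k$, and in particular $a_{k+1}\leq M+\rho_k$ whenever $x^k\in E$, where $M:=\sup_{x\in E}\Vert x-q\Vert<\infty$.

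Since $\rho_k\to 0$ by $(\ref{eq:ro})$, I would fix an index $K$ with $\rho_k<\beta$ for all $k\geq K$, and set $\rho^\ast:=\sup_{k\geq K}\rho_k<\infty$ and $D:=\max\{a_K,\,M+\rho^\ast\}$. An induction on $k\geq K$ then shows $a_k\leq D$: in the inductive step, if $x^k\in E$ then $a_{k+1}\leq M+\rho_k\leq D$, while if $x^k\notin E$ then $\rho_k-2\beta<-\beta<0$ forces $a_{k+1}<a_k\leq D$. (The degenerate case $\mathcal{F}(x^k)=0$ is trivial since $z^k=x^k$.) Because only finitely many indices precede $K$, the whole sequence $\{a_k\}$ is bounded and so is $\{x^k\}$.

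The main obstacle is really just the bookkeeping with the bad set $E$: on iterates inside $E$ one cannot use Condition \ref{con:c} to obtain a descent estimate, and one must argue that such iterates cannot let $a_k$ escape the bound $D$ because the very next iterate is already controlled by $M+\rho_k$. Once this dichotomy between the descent regime ($x^k\notin E$) and the controlled regime ($x^k\in E$) is set up, the boundedness follows essentially by induction.
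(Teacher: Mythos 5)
Your proposal is correct and follows essentially the same route as the paper: apply Lemma \ref{Lem:3} with $y=z^{k}$, expand $\Vert z^{k}-q\Vert^{2}$, and use Condition \ref{con:c} to obtain the dichotomy between descent for $x^{k}\notin E$ (once $\rho_{k}$ is small) and the bound $\Vert x^{k+1}-q\Vert\leq\Vert x^{k}-q\Vert+\rho_{k}$ controlled by the boundedness of $E$ otherwise. Your explicit induction with $D:=\max\{a_{K},\,M+\rho^{\ast}\}$ merely spells out the concluding step that the paper states without detail, so there is no substantive difference.
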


\begin{proof}
The proof is structured along the lines of \cite[Lemma 3]{F86}. Let
$q\in\operatorname*{Fix}(T)$, $\beta>0$ and a bounded subset $E\subseteq%
%TCIMACRO{\U{211d} }%
%BeginExpansion
\mathbb{R}
%EndExpansion
^{n}$ be such that (\ref{eq:c}) is satisfied. We show that, for sufficiently
large $k$, we have
\begin{equation}
\Vert x^{k+1}-q\Vert\leq\Vert x^{k}-q\Vert\label{eq:inq2}%
\end{equation}
if $x^{k}\notin E$ and
\begin{equation}
\left\Vert x^{k+1}-q\right\Vert \leq\left\Vert x^{k}-q\right\Vert
+\rho\label{eq:inq2a}%
\end{equation}
otherwise, where $\rho=\sup_{k}\rho_{k}$. This implies that $\{x^{k}%
\}_{k=0}^{\infty}$ is bounded. If $\mathcal{F}(x^{k})=0$, then $x^{k}=z^{k}$.
By the definition of $x^{k+1}$ and Lemma \ref{Lem:3}, applied to $y:=z^{k}$,
$w:=q$, inequality (\ref{eq:inq2}) is satisfied. Now assume that
$\mathcal{F}(x^{k})\neq0$. Lemma \ref{Lem:3} implies that, for each $z^{k}\in%
%TCIMACRO{\U{211d} }%
%BeginExpansion
\mathbb{R}
%EndExpansion
^{n}$,
\begin{equation}
\Vert P_{\alpha_{k}}(z^{k})-q\Vert^{2}\leq\Vert z^{k}-q\Vert^{2}\text{.}
\label{eq:l13}%
\end{equation}
Therefore%
\begin{align}
\Vert x^{k+1}-q\Vert^{2}  &  \leq\left\Vert x^{k}-\rho_{k}\frac{\mathcal{F}%
(x^{k})}{\Vert\mathcal{F}(x^{k})\Vert}-q\right\Vert ^{2}=\left\Vert
x^{k}-q\right\Vert ^{2}\nonumber\\
&  -2\frac{\rho_{k}}{\Vert\mathcal{F}(x^{k})\Vert}\langle\mathcal{F}%
(x^{k}),x^{k}-q\rangle+\rho_{k}^{2}\text{.} \label{eq:inq}%
\end{align}
Thus, if $x^{k}\notin E$, we have, by (\ref{eq:inq}) and Condition
\ref{con:c},%
\begin{equation}
\Vert x^{k+1}-q\Vert^{2}\leq\Vert x^{k}-q\Vert^{2}-2\rho_{k}\beta+\rho_{k}%
^{2}=\Vert x^{k}-q\Vert^{2}-\rho_{k}(2\beta-\rho_{k})\text{.} \label{eq:inq9}%
\end{equation}
Since $\rho_{k}>0$ and $\lim_{k\rightarrow\infty}\rho_{k}=0$, the latter
inequality implies (\ref{eq:inq2}), provided that $k$ is sufficiently large.
On the other hand, by (\ref{eq:l13}), the definition of $z^{k}$ (see
(\ref{eq:z})) and the triangle inequality, we obtain
\begin{equation}
\left\Vert x^{k+1}-q\right\Vert \leq\left\Vert (x^{k}-q)-\rho_{k}%
\frac{\mathcal{F}(x^{k})}{\Vert\mathcal{F}(x^{k})\Vert}\right\Vert
\leq\left\Vert x^{k}-q\right\Vert +\rho_{k}\text{,}%
\end{equation}
i.e., (\ref{eq:inq2a}) is satisfied. Hence $\{x^{k}\}_{k=0}^{\infty}$ is
indeed bounded as we have already observed.
\end{proof}

\begin{lemma}
\label{Lem:8} Assume that Condition $\mathrm{\ref{con:c}}$ holds. Then any
sequence $\{x^{k}\}_{k=0}^{\infty}$ generated by Algorithm
$\mathrm{\ref{alg:ac}}$ satisfies%
\begin{equation}
\lim_{k\rightarrow\infty}\operatorname*{dist}(x^{k},\operatorname*{Fix}%
(T))=0\text{.}%
\end{equation}

\end{lemma}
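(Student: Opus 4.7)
The plan is to establish a recurrence for $a_k := \operatorname{dist}(x^k, \operatorname{Fix}(T))$ of the form $a_{k+1} \le a_k - f(a_k) + b_k$ with $b_k \to 0$ and $f$ nondecreasing with $f(r)=0 \iff r=0$, and then to invoke Lemma \ref{Lemma:Yamada}. By Condition \ref{con:c} I may apply Lemma \ref{Lem:5} to obtain a bounded closed convex set $C \subseteq \mathbb{R}^n$ containing $\{x^k\}_{k=0}^\infty$ and meeting $\operatorname{Fix}(T)$. Since $T$ is a cutter, it is $1$-SQNE by Theorem \ref{Lem:2}, and $I-T$ is closed at $0$ by Condition \ref{con:d}; hence Proposition \ref{prop:1} yields that $T$ is quasi-shrinking on $C$, i.e., the function $D$ from (\ref{eq:2.12}) satisfies $D(r)>0$ for every $r>0$.

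To produce the recurrence, I would first apply Lemma \ref{Lem:3} with $y := z^k$ and $w := R(x^k)\in \operatorname{Fix}(T)$ to obtain
\[
a_{k+1}^2 \le \|x^{k+1}-R(x^k)\|^2 \le \|z^k-R(x^k)\|^2 - c\,\|x^{k+1}-z^k\|^2 \le (a_k+\rho_k)^2 - c\,\|x^{k+1}-z^k\|^2,
\]
where $c := \mu/(2-\mu) > 0$. Next I would exploit the geometric fact that $P_{H_k}(x^k)=T(x^k)$, so that $\operatorname{dist}(x^k,H_k)=\|x^k-T(x^k)\|$, together with the triangle inequality for the distance function, to get
\[
\|x^{k+1}-z^k\| \;=\; \alpha_k\,\operatorname{dist}(z^k,H_k) \;\ge\; \mu\bigl(\|x^k-T(x^k)\|-\rho_k\bigr)_{+}.
\]
Combining this with Proposition \ref{proposition:D}(iii), which gives $\|x^k-T(x^k)\|\ge D(a_k)$, produces the lower bound $\|x^{k+1}-z^k\| \ge \mu\bigl(D(a_k)-\rho_k\bigr)_{+}$.

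Inserting this into the Fej\'er-type inequality, setting $M:=\sup_k a_k<\infty$, and taking a square root via $\sqrt{X^2-Y}\le X - Y/(2X)$ (valid for $0\le Y\le X^2$ and $X>0$), I arrive at
\[
a_{k+1} \;\le\; a_k - f(a_k) + \rho_k, \qquad f(r) := \frac{c\,\mu^2\,D(r)^2}{8\,(M+\sup_k \rho_k)},
\]
on the set of indices where $D(a_k)\ge 2\rho_k$; on the complementary set, the estimate $D(a_k)<2\rho_k$ forces $a_k\le D^{-1}(2\rho_k)\to 0$, and this small-distance regime can be absorbed into a common error term $b_k\to 0$, giving $a_{k+1} \le a_k - f(a_k) + b_k$ for all large $k$. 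Since $f$ inherits monotonicity from $D$ and vanishes only at $0$ by the quasi-shrinking property, Lemma \ref{Lemma:Yamada} then delivers $a_k\to 0$. The principal obstacle is precisely this case split and the clean absorption of the small-$D(a_k)$ regime into a single perturbation $b_k\to 0$ of the right form, which is where the quasi-shrinking property and the decay $\rho_k\to 0$ are jointly used.
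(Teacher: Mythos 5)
Your proof is correct and rests on the same pillars as the paper's argument: boundedness from Lemma \ref{Lem:5}, the Fej\'er-type inequality of Lemma \ref{Lem:3}, Proposition \ref{proposition:D}(iii), the quasi-shrinking property of $T$ on $C$, and finally Lemma \ref{Lemma:Yamada} applied to a recursion of the form $a_{k+1}\le a_k-\mathrm{const}\cdot D^2(a_k)+b_k$. The execution in the middle differs: the paper applies Lemma \ref{Lem:3} at $x^k$ and compares $x^{k+1}=P_{\alpha_k}(z^k)$ with $P_{\alpha_k}(x^k)$ via the nonexpansivity of $P_{\alpha_k}$ (so the perturbation enters as $\Vert x^{k+1}-P_{\alpha_k}(x^k)\Vert\le\widetilde{\rho}_k$ and no case distinction arises), and it linearizes by factoring $a_k^2-a_{k+1}^2=(a_k+a_{k+1})(a_k-a_{k+1})$; you apply Lemma \ref{Lem:3} at $z^k$, push the perturbation through the $1$-Lipschitz distance to $H_k$ (using $P_{H_k}(x^k)=T(x^k)$), and linearize with $\sqrt{X^2-Y}\le X-Y/(2X)$, which forces your split on whether $D(a_k)\ge 2\rho_k$. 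That split is not a real obstacle, but your phrasing of it is the one imprecise point: $D$ need not be invertible (nor continuous), so ``$a_k\le D^{-1}(2\rho_k)$'' is not the right vehicle, and arguing through smallness of $a_k$ is unnecessary. The clean absorption is direct: on indices with $D(a_k)<2\rho_k$ you have $f(a_k)<c\mu^2\rho_k^2/\bigl(2(M+\sup_j\rho_j)\bigr)$ straight from the definition of $f$, and since always $a_{k+1}\le a_k+\rho_k$, you obtain $a_{k+1}\le a_k-f(a_k)+b_k$ with $b_k:=\rho_k+c\mu^2\rho_k^2/\bigl(2(M+\sup_j\rho_j)\bigr)\rightarrow 0$, valid for every $k$, after which Lemma \ref{Lemma:Yamada} finishes the proof exactly as you say. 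Finally, your explicit appeal to Condition \ref{con:d} and Proposition \ref{prop:1} (to secure $D(r)=0\Leftrightarrow r=0$, which Lemma \ref{Lemma:Yamada} requires) is genuinely needed; the paper's statement lists only Condition \ref{con:c}, but its proof uses the quasi-shrinking property in the same implicit way.
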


\begin{proof}
Recall that $R:=P_{\operatorname{Fix}(T)}$. Lemma \ref{Lem:3} implies that
\begin{equation}
\operatorname*{dist}(x^{k+1},\operatorname*{Fix}(T))\leq\operatorname*{dist}%
(z^{k},\operatorname*{Fix}(T))\text{.}%
\end{equation}
If $\mathcal{F}(x^{k})\neq0$, then, by the triangle inequality,%
\begin{align}
\operatorname*{dist}(z^{k},\operatorname*{Fix}(T))  &  =\Vert z^{k}%
-R(x^{k})\Vert=\left\Vert x^{k}-\rho_{k}\frac{\mathcal{F}(x^{k})}%
{\Vert\mathcal{F}(x^{k})\Vert}-R(x^{k})\right\Vert \nonumber\\
&  \leq\left\Vert x^{k}-R(x^{k})\right\Vert +\rho_{k}\nonumber\\
&  =\operatorname*{dist}(x^{k},\operatorname*{Fix}(T))+\rho_{k}\text{.}%
\end{align}
If $\mathcal{F}(x^{k})=0$, then $x^{k}=z^{k}$ and, consequently,
\begin{equation}
\operatorname*{dist}(z^{k},\operatorname*{Fix}(T))=\operatorname*{dist}%
(x^{k},\operatorname*{Fix}(T)).
\end{equation}
Therefore in both cases we have%
\begin{equation}
\operatorname*{dist}(x^{k+1},\operatorname*{Fix}(T))\leq\operatorname*{dist}%
(x^{k},\operatorname*{Fix}(T))+\widetilde{\rho}_{k}\text{,}%
\end{equation}
where
\begin{equation}
\widetilde{\rho}_{k}:=%
\begin{cases}
\rho_{k}, & \text{if }\mathcal{F}(x^{k})\neq0,\\
0, & \text{if }\mathcal{F}(x^{k})=0.
\end{cases}
\label{e-ro-tilda}%
\end{equation}
Define $a_{k}:=\operatorname{dist}(x^{k},\operatorname*{Fix}(T))$. Let
$C\subseteq%
%TCIMACRO{\U{211d} }%
%BeginExpansion
\mathbb{R}
%EndExpansion
^{n}$ be a bounded, closed and convex set such that $\{x^{k}\}_{k=0}^{\infty
}\subseteq C$ and $\operatorname*{Fix}(T)\cap C\neq\emptyset$. The existence
of such a set $C$ follows from Lemma \ref{Lem:5}. By Proposition
\ref{proposition:D}(iii)and Lemma \ref{Lem:3},
\begin{align}
D^{2}(a_{k})  &  =D^{2}(\operatorname*{dist}(x^{k},\operatorname*{Fix}%
(T)))\leq\Vert x^{k}-T(x^{k})\Vert^{2}=\frac{1}{\alpha_{k}^{2}}\Vert
x^{k}-P_{\alpha_{k}}(x^{k})\Vert^{2}\\
&  \leq\frac{1}{\alpha_{k}\left(  2-\alpha_{k}\right)  }\left(  \left(
\operatorname*{dist}(x^{k},\operatorname*{Fix}(T))\right)  ^{2}-\left(
\operatorname*{dist}(P_{\alpha_{k}}\left(  x^{k}\right)  ,\operatorname*{Fix}%
(T))\right)  ^{2}\right)  \text{.} \label{eq:dwave}%
\end{align}
On the other hand, by the nonexpansivity of $P_{\alpha_{k}}$, (\ref{eq:inq15})
and (\ref{e-ro-tilda}), we get%
\begin{equation}
\Vert x^{k+1}-P_{\alpha_{k}}(x^{k})\Vert^{2}=\Vert P_{\alpha_{k}}%
(z^{k})-P_{\alpha_{k}}(x^{k})\Vert^{2}\leq\Vert z^{k}-x^{k}\Vert
^{2}=\widetilde{\rho}_{k}^{\ 2}\text{.}%
\end{equation}
Therefore%
\begin{equation}
\Vert x^{k+1}-P_{\alpha_{k}}(x^{k})\Vert\leq\widetilde{\rho}_{k}\text{.}%
\end{equation}
Let $s^{k}=R\left(  P_{\alpha_{k}}\left(  x^{k}\right)  \right)  $, i.e.,%
\begin{equation}
\Vert P_{\alpha_{k}}(x^{k})-s^{k}\Vert=\operatorname*{dist}(P_{\alpha_{k}%
}(x^{k}),\operatorname*{Fix}(T))\text{.}%
\end{equation}
Then, by the triangle inequality, we get%
\begin{equation}
\Vert x^{k+1}-s^{k}\Vert\leq\Vert x^{k+1}-P_{\alpha_{k}}(x^{k})\Vert+\Vert
P_{\alpha_{k}}(x^{k})-s^{k}\Vert\text{.}%
\end{equation}
On the other hand, since $s^{k}\in\operatorname*{Fix}(T)$, we have%
\begin{equation}
\operatorname*{dist}(x^{k+1},\operatorname*{Fix}(T))\leq\Vert x^{k+1}%
-s^{k}\Vert\text{.}%
\end{equation}
From the last four inequalities we get
\begin{align}
a_{k+1}  &  \leq\Vert x^{k+1}-P_{\alpha_{k}}(x^{k})\Vert+\operatorname*{dist}%
(P_{\alpha_{k}}(x^{k}),\operatorname*{Fix}(T))\nonumber\\
&  \leq\widetilde{\rho}_{k}+\operatorname*{dist}(P_{\alpha_{k}}(x^{k}%
),\operatorname*{Fix}(T))
\end{align}
or, equivalently,%
\begin{equation}
\left(  \operatorname*{dist}(P_{\alpha_{k}}(x^{k}),\operatorname*{Fix}%
(T))\right)  ^{2}\geq a_{k+1}^{2}-\widetilde{\rho}_{k}^{\ 2}-2\widetilde{\rho
}_{k}\operatorname*{dist}(P_{\alpha_{k}}(x^{k}),\operatorname*{Fix}%
(T))\text{.}%
\end{equation}
Using the above inequality for (\ref{eq:dwave}), we get for all $k\geq0,$%
\begin{equation}
D^{2}(a_{k})\leq\frac{1}{\alpha_{k}\left(  2-\alpha_{k}\right)  }\left(
a_{k}^{2}-a_{k+1}^{2}+\widetilde{\rho}_{k}^{\ 2}+2\widetilde{\rho}%
_{k}\operatorname*{dist}(P_{\alpha_{k}}(x^{k}),\operatorname*{Fix}(T))\right)
\text{.} \label{eq:panch}%
\end{equation}
Now, by Lemma \ref{Lem:5}, the sequence $\{x^{k}\}_{k=0}^{\infty}$ is bounded
and, therefore, so is $\{a_{k}\}_{k=0}^{\infty}$. By Lemma \ref{Lem:3} with
$y:=x^{k}$, we have%
\begin{align}
&  \left(  \operatorname*{dist}(P_{\alpha_{k}}(x^{k}),\operatorname*{Fix}%
(T))\right)  ^{2}\nonumber\\
&  \leq\Vert P_{\alpha_{k}}(x^{k})-z\Vert^{2}\leq\Vert x^{k}-z\Vert^{2}%
-\frac{2-\alpha_{k}}{\alpha_{k}}\Vert P_{\alpha_{k}}(x^{k})-x^{k}\Vert^{2}%
\leq\Vert x^{k}-z\Vert^{2}%
\end{align}
for all $z\in\operatorname*{Fix}(T)$. Taking $z:=R\left(  x^{k}\right)  $\ in
the above inequalities, we obtain\textit{ }%
\begin{equation}
\operatorname*{dist}(P_{\alpha_{k}}(x^{k}),\operatorname*{Fix}(T))\leq
a_{k}=\operatorname*{dist}(x^{k},\operatorname*{Fix}(T))\text{.}%
\end{equation}
Therefore the sequence $\{\operatorname*{dist}(P_{\alpha_{k}}(x^{k}%
),\operatorname*{Fix}(T))\}_{k=0}^{\infty}$ is also bounded. Since $\alpha
_{k}\in\lbrack\mu,2-\mu]$ for some $\mu\in(0,1)$, we have $1/\left(
\alpha_{k}\left(  2-\alpha_{k}\right)  \right)  \leq1/\mu^{2}$. Denote
$b_{k}:=\widetilde{\rho}_{k}^{\ 2}+2\widetilde{\rho}_{k}\operatorname*{dist}%
(P_{\alpha_{k}}(x^{k}),\operatorname*{Fix}(T))$. Using (\ref{eq:panch}), we
get%
\begin{align}
D^{2}(a_{k})  &  \leq\frac{1}{\mu^{2}}\left(  a_{k}^{2}-a_{k+1}^{2}%
+b_{k}\right) \nonumber\\
&  =\frac{1}{\mu^{2}}\left(  a_{k}+a_{k+1}\right)  \left(  a_{k}%
-a_{k+1}\right)  +\frac{1}{\mu^{2}}b_{k}\text{.}%
\end{align}
Since $\{a_{k}\}_{k=0}^{\infty}$ is bounded , we have $1/\mu^{2}\left(
a_{k}+a_{k+1}\right)  \leq M$ for some $M>0$. By the definition of
$\widetilde{\rho}_{k}$, $\lim_{k\rightarrow\infty}b_{k}=0$. Hence%
\begin{equation}
\frac{1}{M}D^{2}(a_{k})\leq a_{k}-a_{k+1}+\frac{1}{\mu^{2}M}b_{k}%
\end{equation}
and now we can apply Lemma \ref{Lemma:Yamada} to conclude that $\lim
_{k\rightarrow\infty}\operatorname*{dist}(x^{k},\operatorname*{Fix}(T))=0,$
which completes the proof.
\end{proof}

\begin{lemma}
\label{Lem:10} Assume that Condition $\mathrm{\ref{con:c}}$ holds. Any
sequence $\{x^{k}\}_{k=0}^{\infty}$ generated by Algorithm
$\mathrm{\ref{alg:ac}}$ satisfies%
\begin{equation}
\lim_{k\rightarrow\infty}\Vert x^{k+1}-x^{k}\Vert=0\text{.}%
\end{equation}

\end{lemma}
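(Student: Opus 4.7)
The plan is to bound $\|x^{k+1}-x^k\|$ via the triangle inequality by $\|x^{k+1}-z^k\|+\|z^k-x^k\|$, show each summand vanishes in the limit, and extract this from Lemma \ref{Lem:3} together with the conclusion of Lemma \ref{Lem:8}. The first piece is essentially given: from \eqref{eq:inq15} and \eqref{eq:ro} we already know $\|z^k-x^k\|=\widetilde{\rho}_k\to 0$, so the whole problem reduces to proving $\|x^{k+1}-z^k\|\to 0$.

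To control $\|x^{k+1}-z^k\|=\|P_{\alpha_k}(z^k)-z^k\|$, I would apply Lemma \ref{Lem:3} with $y:=z^k$ and $w:=R(z^k)\in\operatorname{Fix}(T)$. This yields
\begin{equation}
\frac{2-\alpha_k}{\alpha_k}\|x^{k+1}-z^k\|^2\leq\|z^k-R(z^k)\|^2-\|x^{k+1}-R(z^k)\|^2.
\end{equation}
The right-hand side is bounded above by $\operatorname{dist}(z^k,\operatorname{Fix}(T))^2-\operatorname{dist}(x^{k+1},\operatorname{Fix}(T))^2$, since $R(z^k)$ realizes the distance from $z^k$ while $\|x^{k+1}-R(z^k)\|\geq\operatorname{dist}(x^{k+1},\operatorname{Fix}(T))$.

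Now Lemma \ref{Lem:8} gives $\operatorname{dist}(x^k,\operatorname{Fix}(T))\to 0$, hence by the triangle inequality
\begin{equation}
\operatorname{dist}(z^k,\operatorname{Fix}(T))\leq\operatorname{dist}(x^k,\operatorname{Fix}(T))+\|z^k-x^k\|\to 0,
\end{equation}
and directly $\operatorname{dist}(x^{k+1},\operatorname{Fix}(T))\to 0$. Thus both nonnegative terms on the right-hand side tend to $0$, so their difference does as well. Since $\alpha_k\in[\mu,2-\mu]$ we have $(2-\alpha_k)/\alpha_k\geq\mu/(2-\mu)>0$, and dividing through yields $\|x^{k+1}-z^k\|\to 0$. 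Combined with $\|z^k-x^k\|\to 0$ and the triangle inequality, this proves $\|x^{k+1}-x^k\|\to 0$.

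I do not anticipate any real obstacle here; the work was all done in Lemmas \ref{Lem:3} and \ref{Lem:8}. The only minor subtlety is that one must use $R(z^k)$ rather than $R(x^k)$ in Lemma \ref{Lem:3} so that the characteristic nonexpansivity-plus-shortening inequality applies cleanly with $y=z^k$, and then bound $\operatorname{dist}(z^k,\operatorname{Fix}(T))$ back in terms of $\operatorname{dist}(x^k,\operatorname{Fix}(T))$ using the triangle inequality.
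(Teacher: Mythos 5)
Your proof is correct, but it takes a slightly different route from the paper's, so let me compare. The paper splits at the intermediate point $P_{\alpha_k}(x^k)$ rather than at $z^k$: it writes $\Vert x^{k+1}-x^k\Vert\leq\Vert P_{\alpha_k}(z^k)-P_{\alpha_k}(x^k)\Vert+\Vert P_{\alpha_k}(x^k)-x^k\Vert$, bounds the first term by $\Vert z^k-x^k\Vert\leq\rho_k$ using the nonexpansivity of the relaxed projection, and bounds the second by $\alpha_k\operatorname{dist}(x^k,H_k)\leq\alpha_k\operatorname{dist}(x^k,\operatorname{Fix}(T))$, using only the inclusion $\operatorname{Fix}(T)\subseteq H_k$; Lemma \ref{Lem:8} and \eqref{eq:ro} then finish, yielding the explicit linear estimate $\Vert x^{k+1}-x^k\Vert\leq\rho_k+\alpha_k\operatorname{dist}(x^k,\operatorname{Fix}(T))$. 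You instead split at $z^k$ and control $\Vert x^{k+1}-z^k\Vert=\Vert P_{\alpha_k}(z^k)-z^k\Vert$ via the squared Fej\'er-type inequality of Lemma \ref{Lem:3} with $y=z^k$, $w=R(z^k)$, which forces you to divide by $(2-\alpha_k)/\alpha_k\geq\mu/(2-\mu)$ and hence to invoke the strict relaxation bound $\alpha_k\leq 2-\mu$; the paper's estimate needs only $\alpha_k\leq 2$ at this point. Your argument could be streamlined in two ways: the subtracted term $\operatorname{dist}(x^{k+1},\operatorname{Fix}(T))^2$ is unnecessary, since the right-hand side is already at most $\operatorname{dist}(z^k,\operatorname{Fix}(T))^2\to 0$; and you could bypass Lemma \ref{Lem:3} altogether by noting, exactly as the paper does for $x^k$, that $\Vert P_{\alpha_k}(z^k)-z^k\Vert=\alpha_k\operatorname{dist}(z^k,H_k)\leq\alpha_k\operatorname{dist}(z^k,\operatorname{Fix}(T))$, which with your triangle-inequality bound $\operatorname{dist}(z^k,\operatorname{Fix}(T))\leq\operatorname{dist}(x^k,\operatorname{Fix}(T))+\Vert z^k-x^k\Vert$ gives essentially the paper's proof with the roles of $x^k$ and $z^k$ exchanged. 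Both routes rest entirely on Lemma \ref{Lem:8} and \eqref{eq:inq15}, so the difference is one of economy rather than substance.
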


\begin{proof}
If $\mathcal{F}(x^{k})\neq0$, then by the triangle inequality, the
nonexpansivity of $P_{\alpha_{k}}$ and by (\ref{eq:inq15}), we obtain for all
$k\geq0,$%
\begin{align}
\Vert x^{k+1}-x^{k}\Vert &  \leq\Vert x^{k+1}-P_{\alpha_{k}}(x^{k})\Vert+\Vert
P_{\alpha_{k}}(x^{k})-x^{k}\Vert\nonumber\\
&  =\Vert P_{\alpha_{k}}(z^{k})-P_{\alpha_{k}}(x^{k})\Vert+\Vert P_{\alpha
_{k}}(x^{k})-x^{k}\Vert\nonumber\\
&  \leq\Vert z^{k}-x^{k}\Vert+\Vert P_{\alpha_{k}}(x^{k})-x^{k}\Vert
\nonumber\\
&  \leq\rho_{k}+\alpha_{k}\operatorname*{dist}(x^{k},H_{k})\text{,}
\label{eq:xk1}%
\end{align}
where the latter inequality follows from (\ref{eq:inq15}) and from the obvious
equality $\alpha_{k}\operatorname*{dist}(x^{k},H_{k})=\Vert P_{\alpha_{k}%
}(x^{k})-x^{k}\Vert$. Since for all $k\geq0$,$\ \operatorname*{Fix}%
(T)\subseteq H_{k}$, we have%
\begin{equation}
\operatorname*{dist}(x^{k},H_{k})\leq\operatorname*{dist}(x^{k}%
,\operatorname*{Fix}(T))\text{.}%
\end{equation}
Thus,%
\begin{equation}
\Vert x^{k+1}-x^{k}\Vert\leq\rho_{k}+\alpha_{k}\operatorname*{dist}%
(x^{k},\operatorname*{Fix}(T))\text{.}%
\end{equation}

In case $\mathcal{F}(x^{k})=0,$ we have%
\begin{equation}
\Vert x^{k+1}-x^{k}\Vert=\alpha_{k}\operatorname*{dist}(x^{k},H_{k})\leq
\alpha_{k}\operatorname*{dist}(x^{k},\operatorname*{Fix}(T))\text{.}%
\end{equation}
By Lemma \ref{Lem:8} and (\ref{eq:ro}), we obtain the required result.
\end{proof}

\begin{theorem}
\label{Theorem:1} Assume that Conditions $\mathrm{\ref{con:a}-\ref{con:d}}$
are satisfied. Then any sequence $\{x^{k}\}_{k=0}^{\infty}$ generated by
Algorithm $\mathrm{\ref{alg:ac}}$ converges to the unique solution $x^{\ast}$
of VIP$(\mathcal{F},\operatorname*{Fix}\left(  T\right)  )$.
\end{theorem}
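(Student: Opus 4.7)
The plan is to derive a quasi-Fej\'er style recursion for $\phi_k:=\|x^k-x^*\|^2$ and invoke a standard Xu-type lemma to conclude $\phi_k\to 0$.

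First I would note that existence and uniqueness of $x^*$ are guaranteed by Conditions \ref{con:a} and \ref{con:b}, as the paragraph preceding Condition \ref{con:a} already records. Setting $d_k:=\operatorname*{dist}(x^k,\operatorname*{Fix}(T))$, Lemmas \ref{Lem:5} and \ref{Lem:8} supply the two structural ingredients: $\{x^k\}_{k=0}^{\infty}$ is bounded and $d_k\to 0$. Consequently, for all sufficiently large $k$, $x^k\in(\operatorname*{Fix}(T))_{\varepsilon}$, so Condition \ref{con:b} is available between $x^k$ and $x^*$, and Condition \ref{con:a} combined with boundedness of $\{x^k\}$ yields a uniform bound $\|\mathcal{F}(x^k)\|\leq M$ for some $M>0$.

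Next I would apply Lemma \ref{Lem:3} with $y:=z^k$, $w:=x^*$ and expand $\|z^k-x^*\|^2$ via $z^k=x^k-\rho_k\mathcal{F}(x^k)/\|\mathcal{F}(x^k)\|$ to obtain, for $\mathcal{F}(x^k)\neq 0$,
\begin{equation*}
\phi_{k+1}\leq\phi_k-\frac{2\rho_k}{\|\mathcal{F}(x^k)\|}\langle\mathcal{F}(x^k),x^k-x^*\rangle+\rho_k^2.
\end{equation*}
By Condition \ref{con:b}, $\langle\mathcal{F}(x^k),x^k-x^*\rangle\geq\alpha\phi_k+\langle\mathcal{F}(x^*),x^k-x^*\rangle$. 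Splitting $x^k-x^*=(x^k-R(x^k))+(R(x^k)-x^*)$, the Cauchy--Schwarz inequality bounds the first summand below by $-\|\mathcal{F}(x^*)\|d_k$, while the VIP at $x^*$ applied to $R(x^k)\in\operatorname*{Fix}(T)$ makes the second nonnegative. Combining these estimates with $\|\mathcal{F}(x^k)\|\leq M$ yields
\begin{equation*}
\phi_{k+1}\leq(1-\gamma_k)\phi_k+\eta_k, \qquad \gamma_k:=\frac{2\alpha\rho_k}{M}, \quad \eta_k:=\frac{2\rho_k\|\mathcal{F}(x^*)\|}{M}d_k+\rho_k^2.
\end{equation*}

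Finally, since $\sum_k\gamma_k=+\infty$ by (\ref{eq:ro}) and $\eta_k/\gamma_k\to 0$, a standard Xu-type lemma forces $\phi_k\to 0$. The hard part will be precisely this last step: Lemma \ref{Lemma:Yamada} does \emph{not} apply directly, because its negative drift $-f(a_k)$ must be $k$-independent whereas ours is $-\gamma_k\phi_k$ with $\gamma_k\to 0$. The standard workaround is a two-stage argument --- telescope the recursion to obtain $\liminf_k\phi_k=0$, and then observe that once $\phi_k$ drops below a prescribed threshold the recursion traps it there permanently (this is Xu's lemma in disguise). The edge case $\mathcal{F}(x^k)=0$ is minor: Condition \ref{con:b} at such an $x^k$, combined with the VIP at $x^*$, forces $\phi_k\leq\|\mathcal{F}(x^*)\|d_k/\alpha$, which vanishes.
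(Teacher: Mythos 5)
Your overall route (a quasi-Fej\'er recursion for $\phi_k=\|x^k-x^*\|^2$ plus a Xu-type lemma) is genuinely different from the paper's proof, which never forms such a recursion: the paper instead assumes $\|x^k-x^*\|\geq\zeta$ along the indices with $\mathcal{F}(x^k)\neq0$, derives a per-step decrease of at least $\rho_k\alpha\zeta^2/\tau$, sums it against $\sum_k\rho_k=+\infty$ to get a contradiction, and then upgrades subsequential to full convergence using Lemma \ref{Lem:10}. However, as written your key step fails. From Lemma \ref{Lem:3} and Condition \ref{con:b} you correctly reach
\begin{equation*}
\phi_{k+1}\;\leq\;\phi_k-\frac{2\rho_k}{\|\mathcal{F}(x^k)\|}\bigl(\alpha\phi_k-\|\mathcal{F}(x^*)\|\,d_k\bigr)+\rho_k^2 ,
\end{equation*}
but the passage ``combining these estimates with $\|\mathcal{F}(x^k)\|\leq M$'' uses the bound $1/\|\mathcal{F}(x^k)\|\geq 1/M$ in two opposite directions. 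It is legitimate for the negative drift $-\tfrac{2\rho_k\alpha\phi_k}{\|\mathcal{F}(x^k)\|}\leq-\gamma_k\phi_k$, but the positive error term $\tfrac{2\rho_k\|\mathcal{F}(x^*)\|d_k}{\|\mathcal{F}(x^k)\|}$ is \emph{not} bounded above by $\tfrac{2\rho_k\|\mathcal{F}(x^*)\|d_k}{M}$; an upper bound on it requires a \emph{lower} bound on $\|\mathcal{F}(x^k)\|$, which no hypothesis supplies. Equivalently: replacing $1/\|\mathcal{F}(x^k)\|$ by $1/M$ in the grouped expression is only valid when the bracket $\alpha\phi_k-\|\mathcal{F}(x^*)\|d_k$ is nonnegative. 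So the recursion $\phi_{k+1}\leq(1-\gamma_k)\phi_k+\eta_k$ with your $\eta_k$ is not established, and the Xu-type lemma cannot yet be invoked. This is exactly the subtlety the paper's threshold argument is built to avoid: under the standing assumption $\|x^k-x^*\|\geq\zeta$ the analogous bracket is uniformly positive for large $k$ (see the step leading to (\ref{eq:inq7}) via (\ref{eq:inq5})), so the substitution $\|\mathcal{F}(x^k)\|\leq\tau$ is made only where it is monotone in the right direction.

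The gap is repairable, but it needs an explicit additional argument that you do not give. Two options: (i) a case distinction --- when $\alpha\phi_k\geq\|\mathcal{F}(x^*)\|d_k$ your recursion is valid, and when $\alpha\phi_k<\|\mathcal{F}(x^*)\|d_k$ the quantity $\phi_k$ is already below a vanishing threshold, so you must separately show the iterates cannot escape far above that threshold (Cauchy--Schwarz only gives $\sqrt{\phi_{k+1}}\leq\sqrt{\phi_k}+\rho_k$, which by itself is not enough since $\sum_k\rho_k=\infty$; you need the above-threshold decrease to trap the sequence, which in effect reproduces the paper's argument); or (ii) show that if $\mathcal{F}(x^*)\neq0$ then $\|\mathcal{F}(x^k)\|$ is bounded away from $0$ for large $k$ (if $\mathcal{F}(x^{k_j})\to0$, boundedness plus $d_k\to0$ and continuity give a cluster point $\bar{x}\in\operatorname{Fix}(T)$ with $\mathcal{F}(\bar{x})=0$; strong monotonicity and the VIP at $x^*$ then force $\bar{x}=x^*$, contradicting $\mathcal{F}(x^*)\neq0$), while if $\mathcal{F}(x^*)=0$ the offending error term vanishes identically. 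Either repair makes your quasi-Fej\'er/Xu route go through and yields a somewhat shorter proof than the paper's (it also dispenses with Lemma \ref{Lem:10}); without it, the central inequality is unproved. The remaining ingredients of your plan --- boundedness of $\{\mathcal{F}(x^k)\}$, the use of $R(x^k)$ and the VIP to control $\langle\mathcal{F}(x^*),x^k-x^*\rangle$, the treatment of indices with $\mathcal{F}(x^k)=0$, and the observation that Lemma \ref{Lemma:Yamada} does not apply directly --- are all sound.
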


\begin{proof}
Let $x^{\ast}$ be the unique solution of problem (\ref{eq:vip}). By Lemma
\ref{Lem:8}, $x^{k}\in(\operatorname*{Fix}(T))_{\varepsilon}$ for all
sufficiently large $k$, where $(\operatorname*{Fix}(T))_{\varepsilon}$ is the
set given in Conditions \ref{con:a} and \ref{con:b} (without loss of
generality, $\varepsilon$ is the same in both conditions). By Condition
\ref{con:b}, we have%
\begin{equation}
\langle\mathcal{F}(x^{k})-\mathcal{F}(x^{\ast}),x^{k}-x^{\ast}\rangle
\geq\alpha\Vert x^{k}-x^{\ast}\Vert^{2}%
\end{equation}
and%
\begin{align}
\langle\mathcal{F}(x^{k})-\mathcal{F}(x^{\ast}),x^{k}-x^{\ast}\rangle &
=\langle\mathcal{F}(x^{k}),x^{k}-x^{k+1}\rangle+\langle\mathcal{F}%
(x^{k}),x^{k+1}-x^{\ast}\rangle\nonumber\\
&  -\langle\mathcal{F}(x^{\ast}),x^{k}-x^{\ast}\rangle\text{.}%
\end{align}
Therefore%
\begin{equation}
\langle\mathcal{F}(x^{k}),x^{k+1}-x^{\ast}\rangle\geq\alpha\Vert x^{k}%
-x^{\ast}\Vert^{2}+\langle\mathcal{F}(x^{\ast}),x^{k}-x^{\ast}\rangle
+\langle\mathcal{F}(x^{k}),x^{k+1}-x^{k}\rangle\text{.} \label{eq:inq24}%
\end{equation}
Let $\lambda$ be an arbitrary positive number. Then we have%
\begin{equation}
\langle\mathcal{F}(x^{\ast}),x^{k}-x^{\ast}\rangle\geq-\lambda
\label{eq:lamda1}%
\end{equation}
for all sufficiently large $k$. Indeed. The inequality is clear if
$\mathcal{F}(x^{\ast})=0$. Assume now that $\mathcal{F}(x^{\ast})\neq0$ and
let $\varepsilon:=\lambda/\Vert\mathcal{F}(x^{\ast})\Vert$. By Lemma
\ref{Lem:8}, we have $\Vert x^{k}-z^{k}\Vert\leq\varepsilon$ for all
sufficiently large $k$, where $z^{k}=P_{\operatorname*{Fix}(T)}(x^{k})$.
Consequently,
\begin{align*}
\langle\mathcal{F}(x^{\ast}),x^{k}-x^{\ast}\rangle &  =\langle\mathcal{F}%
(x^{\ast}),z^{k}-x^{\ast}\rangle+\langle\mathcal{F}(x^{\ast}),x^{k}%
-z^{k}\rangle\\
&  \geq-\Vert\mathcal{F}(x^{\ast})\Vert\cdot\Vert x^{k}-z^{k}\Vert\geq
-\Vert\mathcal{F}(x^{\ast})\Vert\cdot\varepsilon=-\lambda\text{.}%
\end{align*}

By the Cauchy--Schwarz inequality,%
\begin{equation}
\langle\mathcal{F}(x^{k}),x^{k+1}-x^{k}\rangle\geq-\Vert\mathcal{F}%
(x^{k})\Vert\Vert x^{k+1}-x^{k}\Vert\text{.} \label{eq:Fxk}%
\end{equation}
From the boundedness of $\{x^{k}\}_{k=0}^{\infty}$ (see Lemma \ref{Lem:5}) and
from Lemma \ref{Lem:8}, it follows, due to the continuity of $\mathcal{F}$ on
$(\operatorname*{Fix}(T))_{\varepsilon}$ (Condition \ref{con:a}), that the
sequence $\{\mathcal{F}(x^{k})\}_{k=0}^{\infty}$ is also bounded. Lemma
\ref{Lem:10} and inequality (\ref{eq:Fxk}) guarantee that%
\begin{equation}
\langle\mathcal{F}(x^{k}),x^{k+1}-x^{k}\rangle\geq-\lambda\label{eq:lamda2}%
\end{equation}
for all sufficiently large $k$. Applying (\ref{eq:lamda1}) and
(\ref{eq:lamda2}) to (\ref{eq:inq24}), we obtain%
\begin{equation}
\langle\mathcal{F}(x^{k}),x^{k+1}-x^{\ast}\rangle\geq\alpha\Vert x^{k}%
-x^{\ast}\Vert^{2}-2\lambda\label{eq:inq5}%
\end{equation}
for all sufficiently large $k$. Divide the indices of $\{x^{k}\}_{k=0}%
^{\infty}$ as follows:%
\begin{equation}
\Gamma:=\{k\geq0\mid\mathcal{F}(x^{k})=0\}\text{ and }\tilde{\Gamma}%
:=\{k\geq0\mid\mathcal{F}(x^{k})\neq0\}\text{.}%
\end{equation}
Equation (\ref{eq:inq5}) implies, since $\lambda$ is arbitrary, that for
$k\in\Gamma,$%
\begin{equation}
\lim_{k\rightarrow\infty}x^{k}=x^{\ast}\text{.}%
\end{equation}
We now show that the sequence $\{x^{k}\}_{k\in\tilde{\Gamma}}$ contains a
subsequence which converges to $x^{\ast}$. To this end, let us consider the
indices in $\tilde{\Gamma}$ and suppose that there exist $\zeta>0$ and an
integer $k_{0}$ such that%
\begin{equation}
\Vert x^{k}-x^{\ast}\Vert\geq\zeta\ \text{for all }k\in\tilde{\Gamma},k\geq
k_{0}. \label{eq:inq6}%
\end{equation}
By Lemma \ref{Lem:3},%
\begin{align}
\Vert x^{k+1}-x^{\ast}\Vert^{2}  &  =\Vert P_{\alpha_{k}}(z^{k})-x^{\ast}%
\Vert^{2}\leq\Vert z^{k}-x^{\ast}\Vert^{2}-\frac{2-\alpha_{k}}{\alpha_{k}%
}\Vert P_{\alpha_{k}}(z^{k})-z^{k}\Vert^{2}\nonumber\\
&  =\left\Vert \left(  x^{k}-\rho_{k}\frac{\mathcal{F}(x^{k})}{\Vert
\mathcal{F}(x^{k})\Vert}\right)  -x^{\ast}\right\Vert ^{2}\nonumber\\
&  -\frac{2-\alpha_{k}}{\alpha_{k}}\left\Vert x^{k+1}-\left(  x^{k}-\rho
_{k}\frac{\mathcal{F}(x^{k})}{\Vert\mathcal{F}(x^{k})\Vert}\right)
\right\Vert ^{2}\nonumber\\
&  =\Vert x^{k}-x^{\ast}\Vert^{2}-\frac{2-\alpha_{k}}{\alpha_{k}}\Vert
x^{k+1}-x^{k}\Vert^{2}-2\frac{\rho_{k}}{\Vert\mathcal{F}(x^{k})\Vert}%
\langle\mathcal{F}(x^{k}),x^{k}-x^{\ast}\rangle\nonumber\\
&  +2\frac{\left(  2-\alpha_{k}\right)  \rho_{k}}{\alpha_{k}\Vert
\mathcal{F}(x^{k})\Vert}\langle\mathcal{F}(x^{k}),x^{k+1}-x^{k}\rangle
+\rho_{k}^{2}\left(  1-\frac{2-\alpha_{k}}{\alpha_{k}}\right)  .
\end{align}
This is less than or equal to%
\begin{align}
&  \Vert x^{k}-x^{\ast}\Vert^{2}-2\frac{\rho_{k}}{\Vert\mathcal{F}(x^{k}%
)\Vert}\langle\mathcal{F}(x^{k}),x^{k}-x^{\ast}\rangle\nonumber\\
&  +2\frac{\left(  2-\alpha_{k}\right)  \rho_{k}}{\alpha_{k}\Vert
\mathcal{F}(x^{k})\Vert}\langle\mathcal{F}(x^{k}),x^{k+1}-x^{k}\rangle
+\rho_{k}^{2}\left(  1-\frac{2-\alpha_{k}}{\alpha_{k}}\right) \nonumber\\
&  =\Vert x^{k}-x^{\ast}\Vert^{2}-2\frac{\rho_{k}}{\Vert\mathcal{F}%
(x^{k})\Vert}\langle\mathcal{F}(x^{k}),x^{k+1}-x^{\ast}\rangle\nonumber\\
&  +2\left(  \frac{2-\alpha_{k}}{\alpha_{k}}+1\right)  \frac{\rho_{k}}%
{\Vert\mathcal{F}(x^{k})\Vert}\langle\mathcal{F}(x^{k}),x^{k+1}-x^{k}%
\rangle\nonumber\\
&  +\rho_{k}^{2}\left(  1-\frac{2-\alpha_{k}}{\alpha_{k}}\right)  .
\end{align}
So, combining the above relations, we obtain%
\begin{align}
\Vert x^{k+1}-x^{\ast}\Vert^{2}  &  \leq\Vert x^{k}-x^{\ast}\Vert^{2}%
-2\frac{\rho_{k}}{\Vert\mathcal{F}(x^{k})\Vert}\langle\mathcal{F}%
(x^{k}),x^{k+1}-x^{\ast}\rangle\nonumber\\
&  +2\left(  \frac{2-\alpha_{k}}{\alpha_{k}}+1\right)  \frac{\rho_{k}}%
{\Vert\mathcal{F}(x^{k})\Vert}\langle\mathcal{F}(x^{k}),x^{k+1}-x^{k}%
\rangle\nonumber\\
&  +\rho_{k}^{2}\left(  1-\frac{2-\alpha_{k}}{\alpha_{k}}\right)  \text{.}
\label{eq:inq:xKxStar1}%
\end{align}
Since $\{\mathcal{F}(x^{k})\}_{k=0}^{\infty}$ is bounded, there exists
$\tau>0$ such that $\Vert\mathcal{F}(x^{k})\Vert\leq\tau$. Therefore,
\begin{equation}
-\frac{1}{\Vert\mathcal{F}(x^{k})\Vert}\leq-\frac{1}{\tau},\text{for all
$k\in$}\tilde{\Gamma}\text{.} \label{eq:inq:xKxStar2}%
\end{equation}
Since $\lambda$ is arbitrary, we can assume that
\begin{equation}
2\lambda\leq\frac{1}{4}\alpha\zeta^{2}.
\end{equation}
By similar arguments as in derivation of inequality (\ref{eq:lamda2}) and by
the boundedness of the sequence $\{\left(  2-\alpha_{k}\right)  /\alpha
_{k}\}_{k=0}^{\infty}$, we can assume that, for all sufficiently large $k$,%
\begin{equation}
\left(  \frac{2-\alpha_{k}}{\alpha_{k}}+1\right)  \langle\mathcal{F}%
(x^{k}),x^{k+1}-x^{k}\rangle\leq\frac{1}{4}\alpha\zeta^{2}-2\lambda\text{.}
\label{eq:inq:xKxStar3}%
\end{equation}
Since $\rho_{k}\rightarrow0$ and again using the boundedness of $\left(
2-\alpha_{k}\right)  /\alpha_{k}$, we can also assume that, for all
sufficiently large $k$,
\begin{equation}
\rho_{k}\left(  1-\frac{2-\alpha_{k}}{\alpha_{k}}\right)  \leq\frac
{\alpha\zeta^{2}}{2\tau}\text{.} \label{eq:inq:xKxStar4}%
\end{equation}
Applying (\ref{eq:inq5}), (\ref{eq:inq:xKxStar3}) and (\ref{eq:inq:xKxStar4})
to (\ref{eq:inq:xKxStar1}), we get
\begin{align}
\Vert x^{k+1}-x^{\ast}\Vert^{2}  &  \leq\Vert x^{k}-x^{\ast}\Vert^{2}%
-2\frac{\rho_{k}}{\Vert\mathcal{F}(x^{k})\Vert}(\alpha\zeta^{2}-2\lambda
)\nonumber\\
&  +2\frac{\rho_{k}}{\Vert\mathcal{F}(x^{k})\Vert}\left(  \frac{1}{4}%
\alpha\zeta^{2}-2\lambda\right)  +\rho_{k}\frac{\alpha\zeta^{2}}{2\tau
}\nonumber\\
&  =\Vert x^{k}-x^{\ast}\Vert^{2}-\frac{3}{2}\frac{\rho_{k}}{\Vert
\mathcal{F}(x^{k})\Vert}\alpha\zeta^{2}+\rho_{k}\frac{\alpha\zeta^{2}}{2\tau
}\text{.} \label{eq:inq:xKxStar5}%
\end{align}
Combining (\ref{eq:inq:xKxStar2}) with (\ref{eq:inq:xKxStar5}), we get
\begin{equation}
\Vert x^{k+1}-x^{\ast}\Vert^{2}\leq\Vert x^{k}-x^{\ast}\Vert^{2}-\rho_{k}%
\frac{\alpha\zeta^{2}}{\tau}%
\end{equation}
for all sufficiently large $k$. Thus there exists an integer $\tilde{k}%
\in\tilde{\Gamma}$ such that%
\begin{equation}
\Vert x^{k+1}-x^{\ast}\Vert^{2}\leq\Vert x^{k}-x^{\ast}\Vert^{2}-\rho_{k}%
\frac{\alpha\zeta^{2}}{\tau}\text{\ for all }k\in\tilde{\Gamma}\text{ such
that }k\geq\tilde{k}\text{.} \label{eq:inq7}%
\end{equation}
By adding these inequalities for $k=\tilde{k},\tilde{k}+1,...$,$\tilde{k}%
+\ell$ over $k\in\tilde{\Gamma}$, we obtain%
\begin{equation}
\Vert x^{\tilde{k}+\ell+1}-x^{\ast}\Vert^{2}\leq\Vert x\tilde{^{k}}-x^{\ast
}\Vert^{2}-\frac{\alpha\zeta^{2}}{\tau}\sum_{k\in\tilde{\Gamma},k=\tilde{k}%
}^{\tilde{k}+\ell}\rho_{k}%
\end{equation}
for any $\ell>0$. However, this is impossible in view of (\ref{eq:ro}). So
there exists no $\zeta>0$ such that (\ref{eq:inq6}) is satisfied. Therefore
$\{x^{k}\}_{k\in\tilde{\Gamma}}$ contains a subsequence $\{x^{k}\}_{k\in
\hat{\Gamma}}$, $\hat{\Gamma}\subseteq$ $\tilde{\Gamma},$ converging to
$x^{\ast}$, i.e., there is a subsequence $\{x^{k}\}_{k\in\Gamma\cup\hat
{\Gamma}}$ of the whole sequence $\{x^{k}\}_{k=0}^{\infty}$ which converges to
$x^{\ast}$. In order to prove that the entire sequence $\{x^{k}\}_{k=0}%
^{\infty}$ converges to $x^{\ast}$, suppose to the contrary that there exists
a subsequence of $\{x^{k}\}_{k=0}^{\infty}$ converging to $\hat{x}$ and
$\hat{x}\neq x^{\ast}$. By Lemma \ref{Lem:10}, $\lim_{k\rightarrow\infty}\Vert
x^{k+1}-x^{k}\Vert=0$, therefore, there exists $\zeta>0$ and an arbitrarily
large integer $j\in\tilde{\Gamma}$ such that%
\begin{equation}
\Vert x^{j}-x^{\ast}\Vert\geq\zeta\text{\ and\ }\Vert x^{j+1}-x^{\ast}%
\Vert\geq\Vert x^{j}-x^{\ast}\Vert\text{.} \label{eq:inq8}%
\end{equation}
However, if $j$ is sufficiently large, we may apply an argument similar to
that used to derive (\ref{eq:inq7}) and obtain the inequality%
\begin{equation}
\Vert x^{j+1}-x^{\ast}\Vert<\Vert x^{j}-x^{\ast}\Vert\text{,}%
\end{equation}
which contradicts (\ref{eq:inq8}). Therefore the whole sequence $\{x^{k}%
\}_{k=0}^{\infty}$ does converge to $x^{\ast},$ as asserted.
\end{proof}

\begin{remark}%
%TCIMACRO{\TeXButton{rm}{\rm\ }}%
%BeginExpansion
\rm\
%EndExpansion
In \cite{Y2004a} the operator $\mathcal{F}$ is assumed to be Lipschitz
continuous and strongly monotone on the image of $T$, while here $\mathcal{F}$
is only assumed to be continuous on $\left(  \operatorname*{Fix}(T)\right)
_{\varepsilon}$ for some $\varepsilon>0$. In \cite[Theorem 5]{Y2004a} Yamada
and Ogura showed that the strong monotonicity of $\mathcal{F}$ could be
weakened and replaced by the paramonotonicity. In \cite{yyy} Yamada et al.
applied successfully Algorithm (\ref{eq:yamada}) to the minimization of the
Moreau envelope of nonsmooth convex functions, where only Lipschitz continuity
and paramonotonicity were assumed.
\end{remark}

\section{An application\label{Sec:app}}

Given an operator $f:%
%TCIMACRO{\U{211d} }%
%BeginExpansion
\mathbb{R}
%EndExpansion
^{n}\rightarrow%
%TCIMACRO{\U{211d} }%
%BeginExpansion
\mathbb{R}
%EndExpansion
^{m}$, we would like to find its minimizers. Clearly, we cannot look for an
optimal solution as defined for a scalar optimization problem ($m=1$).
Therefore we need to define \textit{a priori} which solution concept is
chosen. One might consider the lexicographic order, denoted by $\preceq_{L}$.
This partial order is defined for $x,y\in%
%TCIMACRO{\U{211d} }%
%BeginExpansion
\mathbb{R}
%EndExpansion
^{m}\ $as follows:%
\begin{equation}
x\preceq_{L}y\Leftrightarrow x=y\text{ or }x_{k}<y_{k}\text{ where }%
k:=\min\{i=1,\ldots,m\mid x_{i}\neq y_{i}\}\text{.}%
\end{equation}
Now consider the case where $m=2$, i.e., $f:%
%TCIMACRO{\U{211d} }%
%BeginExpansion
\mathbb{R}
%EndExpansion
^{n}\rightarrow%
%TCIMACRO{\U{211d} }%
%BeginExpansion
\mathbb{R}
%EndExpansion
^{2}$, and denote by $f_{i}:%
%TCIMACRO{\U{211d} }%
%BeginExpansion
\mathbb{R}
%EndExpansion
^{n}\rightarrow%
%TCIMACRO{\U{211d} }%
%BeginExpansion
\mathbb{R}
%EndExpansion
$ the $i$-th coordinate ($i=1,2$) of the function $f$. Then our goal is to
minimize $f$ with respect to $\preceq_{L}$. This problem is also called a
\textit{two-stage} or a \textit{bi-level} or a \textit{hierarchical
optimization problem}. Before introducing the connection of this problem to
our VIP, we recall some definitions and properties.

\begin{definition}%
%TCIMACRO{\TeXButton{rm}{\rm\ }}%
%BeginExpansion
\rm\
%EndExpansion
Let $A:%
%TCIMACRO{\U{211d} }%
%BeginExpansion
\mathbb{R}
%EndExpansion
^{n}\rightarrow2^{%
%TCIMACRO{\U{211d} }%
%BeginExpansion
\mathbb{R}
%EndExpansion
^{n}}\mathcal{\ }$be a set-valued mapping.

(i) $A$ is called a \texttt{maximal monotone mapping} if it is
\texttt{monotone}, i.e.,%
\begin{equation}
\left\langle u-v,x-y\right\rangle \geq0\text{ for all }u\in A(x)\text{ and
}v\in A(y)\text{,}%
\end{equation}
and the \texttt{graph} $G(A)$ of $A$,%
\begin{equation}
G(A):=\left\{  \left(  x,u\right)  \in%
%TCIMACRO{\U{211d} }%
%BeginExpansion
\mathbb{R}
%EndExpansion
^{n}\times%
%TCIMACRO{\U{211d} }%
%BeginExpansion
\mathbb{R}
%EndExpansion
^{n}\mid u\in A(x)\right\}  \text{,}%
\end{equation}
is not properly contained in the graph of any other monotone mapping.

(ii) The \texttt{resolvent} of $A$ with parameter $\lambda$ is the operator
$J_{\lambda}^{A}:=\left(  I+\lambda A\right)  ^{-1}$, where $I$ is the
identity operator.
\end{definition}

\begin{remark}%
%TCIMACRO{\TeXButton{rm}{\rm\ }}%
%BeginExpansion
\rm\
%EndExpansion
\label{remark:resolvent}It is well known that for $\lambda>0$,

(i) $A$ is monotone if and only if the resolvent $J_{\lambda}^{A}$ of $A$ is
single-valued and firmly nonexpansive.

(ii) $A$ is maximal monotone if and only if $J_{\lambda}^{A}$ is
single-valued, firmly nonexpansive and its domain is $%
%TCIMACRO{\U{211d} }%
%BeginExpansion
\mathbb{R}
%EndExpansion
^{n}$, where%
\begin{equation}
\operatorname*{dom}(J_{\lambda}^{A}):=\left\{  x\in%
%TCIMACRO{\U{211d} }%
%BeginExpansion
\mathbb{R}
%EndExpansion
^{n}\mid J_{\lambda}^{A}(x)\neq\emptyset\right\}  \text{.}%
\end{equation}

(iii)%
\begin{equation}
0\in A(x)\Leftrightarrow x\in\operatorname{Fix}(J_{\lambda}^{A})\text{.}
\label{eq:res-fix}%
\end{equation}

\end{remark}

\begin{example}%
%TCIMACRO{\TeXButton{rm}{\rm\ }}%
%BeginExpansion
\rm\
%EndExpansion
Let $C\subseteq%
%TCIMACRO{\U{211d} }%
%BeginExpansion
\mathbb{R}
%EndExpansion
^{n}$ be nonempty, closed and convex. The metric projection onto $C$ is
precisely the resolvent of the normal cone mapping, \textit{i.e.},
\begin{equation}
P_{C}=J_{\lambda}^{N_{C}}.
\end{equation}
In addition, it is known that $N_{C}$ is a maximal monotone mapping.
\end{example}

\begin{remark}%
%TCIMACRO{\TeXButton{rm}{\rm\ }}%
%BeginExpansion
\rm\
%EndExpansion
Let $C\subseteq\mathbb{R}^{n}$ be closed and convex. If a function
$g:\mathbb{R}^{n}\rightarrow\mathbb{R}$ is convex, then
\begin{equation}
x^{\ast} \in\operatorname{Argmin}\{g\mid C\}\Longleftrightarrow0\in\partial
g(x^{\ast})+N_{C}(x^{\ast})\text{,}%
\end{equation}
(see \cite[Chapter VII, Theorem 1.1.1]{UL93}). In the case where $g$ is
continuously differentiable, it follows from the first order optimality
condition that%
\begin{equation}
x^{\ast} \in\operatorname{Argmin}\{g\mid C\}\Longleftrightarrow x^{\ast}\text{
solves VIP}(\nabla g,C),
\end{equation}
see, \textit{e.g.}, \cite[Proposition 3.1, p. 210]{BT89}.
\end{remark}

\begin{remark}%
%TCIMACRO{\TeXButton{rm}{\rm\ }}%
%BeginExpansion
\rm\
%EndExpansion
Both set-valued mappings, $\partial g$ and $N_{C}$, are maximal monotone and
$\operatorname{dom}\left(  \partial g\right)  =%
%TCIMACRO{\U{211d} }%
%BeginExpansion
\mathbb{R}
%EndExpansion
^{n}$, hence also $\partial g+N_{C}$ is maximal monotone (see \cite[Corollary
24.4 (i)]{BC10}) and therefore
\begin{equation}
0\in(\partial g+N_{C})(x^{\ast})\Leftrightarrow x^{\ast}\in\operatorname{Fix}%
(J_{\lambda}^{\partial g+N_{C}})\text{.}%
\end{equation}

\end{remark}

Let's go back to the hierarchical optimization problem:%
\begin{equation}
\min\{f_{2}\mid\operatorname{Argmin}\{f_{1}\mid C\}\}\text{.} \label{eq:lex}%
\end{equation}
Under the assumption of the convexity of $f_{i}:\mathbb{R}^{n}\rightarrow
\mathbb{R}$, $i=1,2$, and the continuous differentiability of $f_{2}$, problem
(\ref{eq:lex}) can be reformulated as VIP$(\nabla f_{2},\operatorname{Fix}%
(J_{\lambda}^{\partial f_{1}+N_{C}}))$. That is, we look for a point $x^{\ast
}\in\operatorname{Fix}(J_{\lambda}^{\partial f_{1}+N_{C}})$ such that%
\begin{equation}
\left\langle \nabla f_{2}(x^{\ast}),x-x^{\ast}\right\rangle \geq0\text{ for
all }x\in\operatorname{Fix}(J_{\lambda}^{\partial f_{1}+N_{C}})\text{.}
\label{vip-fix}%
\end{equation}
So, under appropriate assumptions on $f:%
%TCIMACRO{\U{211d} }%
%BeginExpansion
\mathbb{R}
%EndExpansion
^{n}\rightarrow%
%TCIMACRO{\U{211d} }%
%BeginExpansion
\mathbb{R}
%EndExpansion
^{2}$, which assure that $\nabla f_{2}$ satisfies Conditions \ref{con:a} --
\ref{con:d}, we could apply Algorithm \ref{alg:ac}.

Next we present an example that can be translated into an appropriate VIP over
the fixed point set of a cutter operator.

\begin{example}%
%TCIMACRO{\TeXButton{rm}{\rm\ }}%
%BeginExpansion
\rm\
%EndExpansion
Let $C\subseteq%
%TCIMACRO{\U{211d} }%
%BeginExpansion
\mathbb{R}
%EndExpansion
^{n}$ be closed and convex. Given a convex function $g:\mathbb{R}%
^{n}\rightarrow\mathbb{R}$, we are interested in minimizing $g$ over $C$ so
that the solution has minimal $p$-th norm, where $p\geq2$. This solution is
called a $p$-\texttt{minimal-norm solution}.

Define the operator $f=(f_{1},f_{2}):%
%TCIMACRO{\U{211d} }%
%BeginExpansion
\mathbb{R}
%EndExpansion
^{n}\rightarrow%
%TCIMACRO{\U{211d} }%
%BeginExpansion
\mathbb{R}
%EndExpansion
^{2}$ by%
\begin{equation}
f=\left(
\begin{array}
[c]{c}%
g\\
\frac{1}{p}\left\Vert \cdot\right\Vert _{p}^{p}+\frac{\alpha}{2}\left\Vert
\cdot\right\Vert _{2}^{2}%
\end{array}
\right)  \text{,}%
\end{equation}
where $\left\Vert \cdot\right\Vert _{p}$ denotes the $p$-th norm, i.e.,
$\left\Vert x\right\Vert _{p}:=\left(  \sum_{i=1}^{n}|x_{i}|^{p}\right)
^{1/p}$. Consider the following special case of problem (\ref{eq:lex}):
\begin{equation}
\left\{
\begin{array}
[c]{c}%
\text{minimize }\frac{1}{p}\Vert x\Vert_{p}^{p}+\frac{\alpha}{2}\Vert
x\Vert_{2}^{2}\\
\text{s.t. }x\in\operatorname{Argmin}\{g(x)\mid C\},
\end{array}
\right.
\end{equation}
which is a regularization of the problem under consideration. Notice that
$f_{2}$ is a sum of convex and strongly convex functions and, therefore,
$\nabla f_{2}$ is strongly monotone.

For $p=2$ we get that $\nabla(\frac{1}{2}\left\Vert \cdot\right\Vert _{2}%
^{2})=I$. Hence $\nabla f_{2}$ is Lipschitz continuous on $%
%TCIMACRO{\U{211d} }%
%BeginExpansion
\mathbb{R}
%EndExpansion
^{n}$. Moreover, we do not need a regularization term to obtain strong
monotonicity and we can set $\alpha=0$. Therefore we can use Yamada's and
Ogura's hybrid steepest descent algorithm (see \cite[Section 4]{Y2004a}) to
solve VIP$(I,\operatorname{Fix}(J_{\lambda}^{\partial g+N_{C}}))$ and obtain a
2-minimal-norm solution.

Let now $p>2$. In this case $\nabla(\frac{1}{p}\left\Vert x\right\Vert
_{p}^{p})=(x_{1}|x_{1}|^{p-2},x_{2}|x_{2}|^{p-2},...,x_{n}|x_{n}|^{p-2})$. One
can easily check that $\nabla\frac{1}{p}\left\Vert \cdot\right\Vert _{p}^{p}$
is not globally Lipschitz continuous. Therefore we cannot use Yamada's and
Ogura's algorithm. However, we can use Algorithm \ref{alg:ac} to solve
VIP$(\nabla(\frac{1}{p}\left\Vert \cdot\right\Vert _{p}^{p}+\frac{\alpha}%
{2}\left\Vert \cdot\right\Vert _{2}^{2}),\operatorname{Fix}(J_{\alpha
}^{\partial g+N_{C}}))$. To see this, denote $\mathcal{F}_{1}:=\nabla(\frac
{1}{p}\left\Vert \cdot\right\Vert _{p}^{p})$, $\mathcal{F}_{2}:=\nabla
(\frac{\alpha}{2}\left\Vert \cdot\right\Vert ^{2})$ and $\mathcal{F}%
:=\mathcal{F}_{1}+\mathcal{F}_{2}$. By Remark \ref{remark:suffCond}, it
suffices to show that $\langle\mathcal{F}_{i}(x),x\rangle\geq c\Vert
\mathcal{F}_{i}(x)\Vert_{2}\Vert x\Vert_{2}$, for all $\Vert x\Vert_{2}\geq
R>0$, $i=1,2$, and some $c\in(0,1)$. Notice that for $i=2$ this inequality
holds for all $c\in(0,1)$, because
\begin{equation}
\langle\mathcal{F}_{2}(x),x\rangle=\alpha\Vert x\Vert_{2}^{2}=\Vert
\mathcal{F}_{2}(x)\Vert_{2}\cdot\Vert x\Vert_{2}.
\end{equation}
For $i=1$ the inequality is equivalent to
\begin{equation}
\left(  \Vert x\Vert_{p}^{p}\right)  ^{2}\geq c\cdot\Vert x\Vert_{2p-2}%
^{2p-2}\cdot\Vert x\Vert_{2}^{2},
\end{equation}
which follows directly from Lemma \ref{lema:normEquiv} (see the Appendix) with
$\alpha:=p-1$ and $\beta:=1$. By Remark \ref{remark:resolvent}, the operator
$T:=J_{\lambda}^{\partial g+N_{C}}$ is firmly nonexpansive and therefore it is
a cutter. Moreover, $I-T$ is closed at zero, by the nonexpansivity of $T$.
Hence Conditions \ref{con:a} -- \ref{con:d} are satisfied.
\end{example}

\section{Appendix}

\begin{lemma}
\label{lema:normEquiv} For any $\alpha,\beta\geq\frac{1}{2}$, there is
$c\in(0,1)$ such that
\begin{equation}
\left(  \sum_{i=1}^{n}|x_{i}|^{\alpha+\beta}\right)  ^{2}\geq c\sum_{i=1}%
^{n}|x_{i}|^{2\alpha}\cdot\sum_{i=1}^{n}|x_{i}|^{2\beta}%
\end{equation}
for all $x \in\mathbb{R}^{n}$.
\end{lemma}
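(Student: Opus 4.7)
The plan is to use a straightforward homogeneity and compactness argument, reducing the inequality to the fact that a certain continuous positive ratio attains a positive minimum on a compact set.

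First, I would reformulate. Writing $y_i := |x_i| \geq 0$ and setting $a_i := y_i^{\alpha}$, $b_i := y_i^{\beta}$, the claimed inequality becomes a reverse Cauchy--Schwarz statement
\begin{equation}
\Bigl(\sum_{i=1}^{n} a_i b_i\Bigr)^{2} \;\geq\; c \sum_{i=1}^{n} a_i^{2} \cdot \sum_{i=1}^{n} b_i^{2},
\end{equation}
where the vectors $(a_i)$ and $(b_i)$ are tied together by a common parameterization through $(y_i)$. The key observation is that the ratio
\begin{equation}
R(x) \;:=\; \frac{\bigl(\sum_{i=1}^{n} |x_i|^{\alpha+\beta}\bigr)^{2}}{\sum_{i=1}^{n} |x_i|^{2\alpha} \cdot \sum_{i=1}^{n} |x_i|^{2\beta}}
\end{equation}
is homogeneous of degree $0$ in $x$ (numerator and denominator both have degree $2(\alpha+\beta)$), and is continuous on $\mathbb{R}^{n} \setminus \{0\}$ because, for any $x \neq 0$, the conditions $\alpha,\beta > 0$ force each of the three sums to be strictly positive.

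Second, by $0$-homogeneity, $\inf_{x \neq 0} R(x) = \min_{\|x\|_2=1} R(x)$, and the latter minimum is attained by compactness of the unit sphere. Calling this minimum $c_0$, the strict positivity of $R$ on $\mathbb{R}^n \setminus \{0\}$ forces $c_0 > 0$. On the other hand, the classical Cauchy--Schwarz inequality applied to $(a_i)$ and $(b_i)$ gives $R(x) \leq 1$ for all nonzero $x$, so $c_0 \in (0, 1]$.

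Finally, I would set $c := \min\{c_0,\tfrac{1}{2}\}\in(0,1)$. Then $R(x) \geq c_0 \geq c$ for every $x \neq 0$, which, after clearing denominators, is exactly the claimed inequality; the case $x = 0$ is trivial because both sides vanish. There is no serious obstacle here: the only point requiring care is the verification that the denominator of $R$ does not vanish on the unit sphere, which follows immediately from $\alpha,\beta \geq \tfrac{1}{2} > 0$. (In fact the argument works for any $\alpha,\beta>0$; the stronger hypothesis $\alpha,\beta\geq \tfrac{1}{2}$ is inherited from the application in the preceding section.)
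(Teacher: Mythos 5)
Your proof is correct, but it follows a different route from the paper. The paper's argument is shorter and leans on a standard fact: since $\alpha,\beta\geq\tfrac12$, the exponents $2\alpha$, $2\beta$ and $\alpha+\beta$ are all at least $1$, so $\Vert\cdot\Vert_{2\alpha}$, $\Vert\cdot\Vert_{2\beta}$ and $\Vert\cdot\Vert_{\alpha+\beta}$ are genuine norms on $\mathbb{R}^{n}$ and hence equivalent; choosing $c\in(0,1)$ with $\Vert x\Vert_{\alpha+\beta}\geq c\Vert x\Vert_{2\alpha}$ and $\Vert x\Vert_{\alpha+\beta}\geq c\Vert x\Vert_{2\beta}$, the paper writes $\bigl(\Vert x\Vert_{\alpha+\beta}^{\alpha+\beta}\bigr)^{2}=\Vert x\Vert_{\alpha+\beta}^{2\alpha}\cdot\Vert x\Vert_{\alpha+\beta}^{2\beta}\geq c^{2\alpha+2\beta}\,\Vert x\Vert_{2\alpha}^{2\alpha}\cdot\Vert x\Vert_{2\beta}^{2\beta}$, which is the claim with the explicit constant $c^{2(\alpha+\beta)}$. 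You instead observe that the ratio $R(x)$ is $0$-homogeneous, continuous and strictly positive off the origin, and take its minimum over the unit sphere; this is equally valid (the Cauchy--Schwarz remark is not even needed once you cap the constant at $\tfrac12$), and it has the genuine advantage of not using the norm structure at all, so it works for every $\alpha,\beta>0$, as you note. One clarification on your closing aside: in the paper the hypothesis $\alpha,\beta\geq\tfrac12$ is not merely inherited from the application --- it is exactly what makes the three functionals norms so that norm equivalence can be quoted. Of course, norm equivalence in finite dimensions is itself proved by a compactness argument, so the two proofs ultimately rest on the same mechanism; yours just inlines it, while the paper's yields a constant expressed through the norm-equivalence constants.
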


\begin{proof}
We have to show the following inequality:
\begin{equation}
\left(  \Vert x\Vert_{\alpha+\beta}^{\alpha+\beta}\right)  ^{2}\geq c\Vert
x\Vert_{2\alpha}^{2\alpha}\cdot\Vert x\Vert_{2\beta}^{2\beta}.
\end{equation}
The norms $\left\Vert \cdot\right\Vert _{2\alpha}$, $\left\Vert \cdot
\right\Vert _{2\beta}$ and $\left\Vert \cdot\right\Vert _{\alpha+\beta}$ are
all equivalent; hence
\begin{equation}
\left\Vert x\right\Vert _{\alpha+\beta}\geq c\left\Vert x\right\Vert
_{2\alpha} \text{ and } \left\Vert x\right\Vert _{\alpha+\beta}\geq
c\left\Vert x\right\Vert _{2\beta}%
\end{equation}
for some $c>0$ and all $x\in\mathbb{R}^{n}$. Without any loss of generality,
we can assume that $c\in(0,1)$. Then
\begin{equation}
\left(  \Vert x\Vert_{\alpha+\beta}^{\alpha+\beta}\right)  ^{2}=\left\Vert
x\right\Vert _{\alpha+\beta}^{2\alpha}\cdot\left\Vert x\right\Vert
_{\alpha+\beta}^{2\beta}\geq c^{2\alpha+2\beta}\Vert x\Vert_{2\alpha}%
^{2\alpha}\cdot\Vert x\Vert_{2\beta}^{2\beta},
\end{equation}
which yields our assertion.
\end{proof}

\bigskip

\textbf{Acknowledgments}. The third author was partially supported by Israel
Science Foundation (ISF) Grant number 647/07, the Fund for the Promotion of
Research at the Technion and by the Technion VPR Fund.

\bigskip

\end{document}